\pgfplotsset{compat=1.18}
\renewcommand{\cite}{\citet}
\newtheorem{theorem}{Theorem}
\newtheorem{lemma}{Lemma}
\newtheorem{example}{Example}
\newtheorem{definition}{Definition}
\DeclareMathOperator{\Var}{Var}
\renewcommand{\P}{\ensuremath{\mathbb{P}}}
\newcommand{\E}{\ensuremath{\mathbb{E}}}
\newcommand{\T}{\ensuremath{\mathsf{T}}}
\newcommand{\R}{\ensuremath{\mathbb{R}}}
\newcommand{\I}{\ensuremath{\mathbb{I}}}
\newcommand{\N}{\ensuremath{\mathbb{N}}}
\newcommand{\cP}{\ensuremath{\mathcal{P}}}
\newcommand{\cN}{\ensuremath{\mathcal{N}}}
\newcommand{\cU}{\ensuremath{\mathcal{U}}}
\newcommand{\up}{\ensuremath{\mathrm{up}}}
\newcommand{\lo}{\ensuremath{\mathrm{lo}}}
\newcommand{\ind}{\ensuremath{\mathrm{ind}}}
\newcommand{\FHU}{\ensuremath{\mathrm{FHU}}}
\newcommand{\FHL}{\ensuremath{\mathrm{FHL}}}
\newcommand{\conv}{\ensuremath{{\hspace*{0.2mm}\mathrm{c}}}}
\newcommand{\diffi}{\,\mathrm{d}}
\newlist{inlineroman}{enumerate*}{1}
\setlist[inlineroman]{afterlabel=~,label=(\roman*)}
\newcommand{\figurescale}{\iftoggle{journal}{1}{0.91}}
\begin{document}

\title{
  Sharp Anti-Concentration Inequalities \\
  for Extremum Statistics via Copulas
}

\author{
  Matias D.\ Cattaneo\textsuperscript{1}
  \and Ricardo P.\ Masini\textsuperscript{2*}
  \and William G.\ Underwood\textsuperscript{3}
}

\maketitle

\footnotetext[1]{
  Department of Operations Research
  and Financial Engineering,
  Princeton University.
}
\footnotetext[2]{
  Department of Statistics,
  University of California, Davis.
}
\footnotetext[3]{
  Statistical Laboratory,
  University of Cambridge.
}
\let\thefootnote\relax
\footnotetext[1]{
  \textsuperscript{*}Corresponding author:
  \href{mailto:rmasini@ucdavis.edu}{\texttt{rmasini@ucdavis.edu}}
}
\newcommand{\thefootnote}{\arabic{footnote}}

\setcounter{page}{0}\thispagestyle{empty}

\begin{abstract}
  {\normalsize %
We derive sharp upper and lower bounds for the pointwise concentration function
of the maximum statistic of $d$ identically distributed
real-valued random variables. Our
first main result places no restrictions either on the common marginal law of
the samples or on the copula describing their joint distribution. We show that,
in general, strictly sublinear dependence of the concentration function on the
dimension $d$ is not possible.
We then introduce a new class of copulas, namely those with a convex diagonal
section, and demonstrate that restricting to this class yields a sharper upper
bound on the concentration function. This allows us to establish several new
dimension-independent and poly-logarithmic-in-$d$ anti-concentration
inequalities for a
variety of marginal distributions under mild dependence assumptions. Our
theory improves upon the best known results in certain special cases.
Applications to high-dimensional statistical inference are presented, including
a specific example pertaining to Gaussian mixture approximations for factor
models, for which our main results lead to superior distributional guarantees.
}
\end{abstract}

\vspace*{10mm}
\noindent\textbf{Keywords}:
Anti-concentration;
copulas;
high-dimensional probability;
concentration;
extreme value theory;
order statistics.

\vspace*{4mm}
\noindent\textbf{MSC}:
Primary
60E15; %
Secondary
62H05, %
62G32. %

\clearpage
\pagebreak

\tableofcontents
\pagebreak

\section{Introduction}

Concentration of measure has been extensively studied throughout the
probability and statistics literature. Anti-concentration phenomena, on the
other hand, appear much less frequently and are generally not so well
understood \citep{vershynin2007}.
While it is impossible to pin down the date when anti-concentration
became a topic of interest, its systematic study is commonly attributed to
\citet{levy1954}, who defined the concentration function of a real-valued
random variable $Y$ as
$L(Y, \varepsilon) \vcentcolon=
\sup_{x \in \R} \P(x \leq Y \leq x + \varepsilon)$
for $\varepsilon \geq 0$.
The early focus was almost exclusively on the asymptotic behavior of
the concentration function as $\varepsilon \to 0$, motivated by
applications to quantitative central limit theorems.
The last two decades have seen a revival
of interest in anti-concentration, fueled by advances in
high-dimensional and nonparametric statistics
\citep{bakshi2020outlier,chernozhukov2013gaussian,%
  chernozhukov2014anti,deng2020beyond,%
koike2021notes,kuchibhotla2021high},
random matrix theory
\citep{litvak2017adjacency,nie2022matrix},
geometric analysis
\citep{livshyts2014maximal,livshyts2021some,%
paouris2012small,paouris2018gaussian}
and applied probability
\citep{aizenman2009bernoulli,belloni2024anti,chernozhukov2015comparison,%
  fox2021combinatorial,gotze2017large,%
  kozbur2021dimension,%
  krishnapur2016anti,%
meka2015anticoncentration,rudelson2015small}.
Recently, attention has shifted to finding
sharp non-asymptotic upper bounds
for the concentration function in terms of $\varepsilon$
and properties of the law of $Y$.
One particular example of interest is the maximum statistic
$Y \vcentcolon= \max_{i \in [d]} X_i$, with
$X_1, \ldots, X_d$ real-valued random variables.

When the distribution of $Y$ admits a density $f(x)$ with
respect to the Lebesgue measure,
a simple upper bound for the concentration function
is obtained by observing that
$L(Y,\varepsilon) \leq \varepsilon \sup_{x \in \R} f(x)$.
This technique was applied by \cite[Theorem 3]{chernozhukov2015comparison}
to $Y \vcentcolon = \max_{i \in [d]} X_i$
with $(X_1,\dots, X_d)$ a zero-mean multivariate Gaussian random vector
with a non-singular covariance matrix.
Their proof leveraged the fact that conditioning on components
preserves joint Gaussianity,
and the resulting anti-concentration inequality was used to establish
a conditional multiplier central limit theorem in a high-dimensional regime.
A related approach is to provide bounds for the concentration function in
terms of the variance of $Y$; \cite{bobkov2015} used this method to
establish matching upper and lower bounds (up to a constant factor)
under a log-concavity assumption.
Unfortunately, if $X_i$ are log-concave random variables, then
there is no guarantee that $\max_{i \in [d]} X_i$ is similarly
log-concave (refer to \cite{saumard_wellner} for a comprehensive
review of log-concavity properties). Furthermore, lower bounds on
the variance of $\max_{i \in [d]} X_i$ are typically not easy to obtain unless
the joint distribution of $(X_1, \dots, X_d)$ is specified.
In the multivariate Gaussian setting,
\cite{giessing2023} recently established such bounds
in terms of the dimension or metric entropy of the joint distribution.
Another approach builds upon the seminal paper of
\cite{nazarov2003maximal}, establishing anti-concentration inequalities
for the maximum statistic
using properties of the Gaussian distribution
and tools from convex geometry
\citep{chernozhukov2017central,chernozhukov2017detailed}.
See also \cite[Theorem~10]{deng2020beyond}
for a refined inequality.

Our goal is to study the anti-concentration behavior of maximum statistics
by providing upper and lower bounds for the pointwise concentration function
\begin{align}
  \label{eq:pointwise_concentration}
  \P \biggl( x < \max_{i \in [d]} X_i \leq x + \varepsilon \biggr),
\end{align}
where $X_1, \ldots, X_d$ are real-valued random variables,
$x \in \R$ and $\varepsilon \geq 0$.
In contrast to several prior results,
we refrain from taking a supremum over $x \in \R$,
with our main results focusing on pointwise
(rather than uniform) anti-concentration phenomena.
In principle, this can lead to sharper inequalities when
restricting to $x$ lying in a subset of $\R$
(see Section~\ref{sec:application} for an illustration).
Further, we seek to impose minimal assumptions on the dependence
structure of the random vector $(X_1, \ldots, X_d)$,
as determined by its associated copula
\citep[see][for a contemporary review]{durante2016principles}.
We assume throughout that the variables
$X_1, \ldots, X_d$ share a common marginal distribution.

Our first main result, given as
Theorem~\ref{thm:common} in Section~\ref{sec:arbitrary},
gives upper and lower bounds for the pointwise concentration function
of $\max_{i \in [d]} X_i$, as defined in \eqref{eq:pointwise_concentration}.
Crucially, this theorem makes no assumptions at all on the copula
describing the dependence structure of $(X_1, \ldots, X_d)$.
As such, it is applicable even in cases
where the joint distribution is intractable or unspecified.
Moreover, we construct copulas which exactly attain our
upper and lower bounds, respectively;
therefore Theorem~\ref{thm:common} is not improvable
unless extra conditions are imposed on the copula.
When considering marginally Gaussian random variables
(Example~\ref{ex:gaussian_linear}), we show that
the worst-case concentration function
(i.e.,\ the maximum over all possible copulas)
is substantially larger (as a function of the dimension $d$) than when
assuming joint Gaussianity \citep[see][Theorem~1]{chernozhukov2017detailed}.
It is therefore essential in applications,
particularly in high-dimensional regimes,
to consider properties of the copula associated with
$(X_1, \ldots, X_d)$ as well as their marginal laws.
The proof of Theorem~\ref{thm:common},
presented in Section~\ref{sec:proof_thm_common}, relies only on basic
properties of copulas
and their diagonal sections.
A similar copula-based approach was taken by
\cite{frank1987best}, who obtained
optimal upper and lower bounds for the distribution function of the sum
(and other combinations) of several random variables,
under arbitrary dependence.

In Section~\ref{sec:convex} we obtain a more refined result
as Theorem~\ref{thm:convex} by restricting the class of copulas
under consideration.
Specifically, we impose a convexity condition on the diagonal section
of the copula; this assumption is novel, to the best of the authors' knowledge,
and leads to a class of copulas that could be useful in other applications.
We present an explicit copula for which our concentration function upper bound
is tight, demonstrating its optimality.
The resulting anti-concentration inequality
for the maximum statistic is typically substantially
stronger than that obtained using Theorem~\ref{thm:common};
when applied to a joint distribution with
Gaussian margins, we improve several well-known results in
the literature where
previously a multivariate Gaussian law was assumed
\citep[cf.][]{chernozhukov2015comparison,
chernozhukov2017detailed}.
Moreover, we demonstrate the applicability of
Theorem~\ref{thm:convex} to several popular families of copulas,
and discuss the resulting concentration bounds for
a variety of marginal distributions.

Section~\ref{sec:application} presents an application of our main
results in high-dimensional statistical inference,
highlighting the importance of sharp anti-concentration bounds
in distributional analysis.
We give an explicit example in the context of
Gaussian mixture approximations for high-dimensional factor models.
In particular, we demonstrate that our main results lead to superior
anti-concentration inequalities, and therefore better
guarantees on the quality of the distributional approximation,
especially when the Gaussian mixture components
exhibit a wide range of variances.

Proofs and further details are given in
Section~\ref{sec:proofs};
Section~\ref{sec:conclusion} contains
concluding remarks.

\subsection{Notation}

We use $\N \vcentcolon= \{1, 2, \ldots \}$ for the natural numbers,
and for $d \in \N$ we define $[d] \vcentcolon= \{1, \ldots, d\}$.
The multivariate normal distribution with mean vector
$\mu$ and covariance matrix $\Sigma$ is denoted by $\cN(\mu, \Sigma)$,
and the cumulative distribution function
(CDF) and Lebesgue density function of $\cN(0, 1)$
are written as $\Phi$ and $\phi$ respectively.
The uniform distribution on $[0, 1]$ is denoted by $\cU$.
For $a, b \in \R$, $a \land b$ and $a \lor b$
are their minimum and maximum, respectively.
For a function $F$ of a single real variable,
we use $F^-(x)$ for its left limit at $x$ if it exists.
For two functions $f$ and $g$, we write
$f \circ g(x) = f\bigl(g(x)\bigr)$ for
their composition whenever it is well-defined.
The natural logarithm is denoted by $\log$.

\subsection{Preliminary results}%

Suppose that $X_1, \ldots, X_d$ are real-valued random variables with
a common distribution function $F : \R \to [0,1]$.
By a well-known theorem due to Sklar
\citep[Theorem~2.10.9]{nelsen2006},
the joint law of $(X_1, \ldots, X_d)$ decomposes as
$\P ( X_1 \leq x_1, \ldots, X_d \leq x_d )
= C\bigl(F(x_1), \ldots, F(x_d)\bigr)$,
where $C$ is a $d$-dimensional copula
\citep[Definition~2.10.6]{nelsen2006}.
Considering $x_1 = \cdots = x_d =\vcentcolon x$, we obtain
\begin{align}
  \label{eq:copula_diagonal}
  \P \biggl( x < \max_{i \in [d]} X_i \leq x + \varepsilon \biggr)
  &= C\bigl(F(x + \varepsilon), \ldots, F(x + \varepsilon)\bigr)
  - C\bigl(F(x), \ldots, F(x)\bigr).
\end{align}
As such, the distribution of $\max_{i \in [d]} X_i$
depends on the copula associated with
$(X_1, \ldots, X_d)$ only through its
diagonal section, as formalized in Definition~\ref{def:diagonal}.

\begin{definition}
  \label{def:diagonal}
  Let $d \in \N$. A function
  $\Delta : [0, 1] \to [0, 1]$ is a
  \emph{$d$-dimensional copula diagonal}
  if there exists a $d$-dimensional copula
  $C: [0, 1]^d \to [0, 1]$ with
  $\Delta(u) = C(u, \ldots, u)$ for all $u \in [0, 1]$.
\end{definition}

Lemma~\ref{lem:copula_diagonal} below
gives a characterization of $d$-dimensional
copula diagonals.
In \cite{fernandez2018constructions},
an explicit copula $C$
is constructed with a specified diagonal $\Delta$;
for our purposes, any such copula suffices by \eqref{eq:copula_diagonal}.
See \cite{cuculescu2001copulas} and \cite{jaworski2009copulas}
for further background on copulas and their diagonals.

\begin{lemma}[Theorem~1, \citealp{fernandez2018constructions}]
  \label{lem:copula_diagonal}
  A function $\Delta : [0, 1] \to [0, 1]$ is a
  $d$-dimensional copula diagonal if and only if it satisfies:
  \begin{inlineroman}
    \item $\Delta(1) = 1$;
    \item $\Delta(u) \leq u$ for all $u \in [0, 1]$; and
    \item $0 \leq \Delta(u') - \Delta(u) \leq d(u'-u)$
      for all $u, u' \in [0, 1]$ with $u \leq u'$.
  \end{inlineroman}
\end{lemma}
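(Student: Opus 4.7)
My plan is to prove the biconditional by handling the two directions separately; I expect the forward (``necessity'') direction to be a routine consequence of Sklar's copula axioms, while the backward (``sufficiency'') direction is the main obstacle and requires an explicit construction.

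\textbf{Necessity.} Assuming $\Delta(u) = C(u, \ldots, u)$ for some $d$-copula $C$, I would derive conditions (i)--(iii) directly from the defining properties of $C$. Condition (i) follows from the boundary requirement $C(1, \ldots, 1) = 1$. Condition (ii) follows because each univariate marginal of $C$ is uniform, which implies $C(u_1, \ldots, u_d) \leq u_i$ for every $i$; setting $u_i = u$ yields $\Delta(u) \leq u$. For (iii), the lower bound $\Delta(u') - \Delta(u) \geq 0$ comes from coordinatewise monotonicity of $C$. The upper Lipschitz bound uses the standard fact that every $d$-copula is $1$-Lipschitz in each coordinate separately (a consequence of $d$-increasingness combined with uniform marginals), so telescoping $C(u', \ldots, u') - C(u, \ldots, u)$ one coordinate at a time picks up the factor of $d$.

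\textbf{Sufficiency.} Given $\Delta$ satisfying (i)--(iii), the task is to exhibit a $d$-copula $C$ with $C(u, \ldots, u) = \Delta(u)$. This is where the substantive work lies: conditions (i)--(iii) are one-dimensional, whereas $C$ is a $d$-dimensional object, so I must extend the prescription to all of $[0, 1]^d$ while preserving uniform marginals and $d$-increasingness. The approach I would take, following Fern\'andez-S\'anchez and \'Ubeda-Flores, is to build $C$ as a mixture of a singular component supported on (or near) the main diagonal $\{(u, \ldots, u) : u \in [0, 1]\}$ whose distribution reproduces $\Delta$, together with off-diagonal components that restore each univariate marginal to Lebesgue measure on $[0, 1]$. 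Condition (iii) is precisely what makes this feasible: the Lipschitz bound ensures that $u \mapsto u - \Delta(u)/d$ is non-decreasing, which supplies the raw material for the off-diagonal correction and guarantees that the residual measure is non-negative.

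The remaining technical difficulty is verifying that the resulting $C$ is grounded, has uniform marginals, and is $d$-increasing, while still matching $\Delta$ on the diagonal. This verification is combinatorial rather than conceptual, and in the present context I would simply invoke the cited construction rather than reproducing its bookkeeping, since downstream the lemma is used only to justify the existence of copulas with arbitrarily prescribed diagonals satisfying (i)--(iii).
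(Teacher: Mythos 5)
The paper does not actually prove this lemma: it imports it wholesale as Theorem~1 of Fern\'andez-S\'anchez and \'Ubeda-Flores (2018), which is precisely where your sufficiency step ends up as well, so your proposal is consistent with the paper's treatment. Your necessity argument (boundary condition, domination $C(u_1,\ldots,u_d)\leq u_i$ from the uniform marginals, monotonicity, and the coordinatewise $1$-Lipschitz property telescoped over $d$ coordinates) is correct and standard, and your observation that (iii) makes $u \mapsto u - \Delta(u)/d$ non-decreasing is indeed the key feasibility condition for the cited diagonal-prescribed construction.
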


\section{Anti-concentration inequalities for arbitrary copulas}
\label{sec:arbitrary}

We derive sharp upper and lower bounds on the
pointwise concentration
function of the maximum statistic of identically distributed
(not necessarily independent) random variables,
imposing no further assumptions on either their common marginal law
or the copula describing their joint distribution.
The relevant class of distributions is specified in
Definition~\ref{def:arbitrary}.

\begin{definition}
  \label{def:arbitrary}
  Let $d \in \N$ and $F: \R \to [0, 1]$ be a CDF.
  Write $\cP_d(F)$ for the set of distributions
  $\P$ on $\R^d$ which have joint CDFs of the form
  \begin{align*}
    \P \bigl( X_1 \leq x_1, \ldots, X_d \leq x_d \bigr)
    &= C\bigl(F(x_1), \ldots, F(x_d)\bigr),
  \end{align*}
  for some $d$-dimensional copula $C$.
\end{definition}

Our first main result is given in Theorem~\ref{thm:common}.

\begin{theorem}
  \label{thm:common}
  Let $d \in \N$ and $F: \R \to [0, 1]$ be a CDF.
  For each $x \in \R$ and $\varepsilon \geq 0$,
  \begin{align}
    \label{eq:common_upper}
    \max_{\P \in \cP_d(F)}
    \P \biggl( x < \max_{i \in [d]} X_i \leq x + \varepsilon \biggr)
    &=
    \bigl\{d \bigl(F(x + \varepsilon) - F(x)\bigr)\bigr\}
    \land F(x + \varepsilon), \\
    \label{eq:common_lower}
    \min_{\P \in \cP_d(F)}
    \P \biggl( x < \max_{i \in [d]} X_i \leq x + \varepsilon \biggr)
    &=
    0 \lor
    \bigl\{1 - F(x) - d \bigl(1 - F(x + \varepsilon)\bigr)\bigr\}.
  \end{align}
\end{theorem}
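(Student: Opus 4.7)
The plan is to reduce the extremization over distributions $\P \in \cP_d(F)$ to an extremization over $d$-dimensional copula diagonals. By \eqref{eq:copula_diagonal}, the quantity of interest equals exactly $\Delta(b) - \Delta(a)$, where $\Delta$ is the diagonal section of the copula of $\P$, $a \vcentcolon= F(x)$, and $b \vcentcolon= F(x+\varepsilon)$, with $0 \leq a \leq b \leq 1$. Combining Definition~\ref{def:diagonal} with Lemma~\ref{lem:copula_diagonal}, the problem becomes maximizing and minimizing $\Delta(b) - \Delta(a)$ over the class of functions $\Delta : [0, 1] \to [0, 1]$ satisfying (i) $\Delta(1) = 1$, (ii) $\Delta(u) \leq u$, and (iii) the nondecreasing $d$-Lipschitz condition. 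Note that (ii) together with (iii) forces $\Delta(0) = 0$.

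With this reformulation, both bounds come out immediately from the three defining conditions. For the upper bound, (iii) gives $\Delta(b) - \Delta(a) \leq d(b-a)$, while (ii) together with $\Delta(a) \geq 0$ yields $\Delta(b) - \Delta(a) \leq \Delta(b) \leq b$; their minimum is the right-hand side of \eqref{eq:common_upper}. For the lower bound, applying (iii) on $[b, 1]$ produces $\Delta(b) \geq 1 - d(1-b)$, and combining with $\Delta(a) \leq a$ from (ii) and the trivial inequality $\Delta(b) - \Delta(a) \geq 0$ gives \eqref{eq:common_lower}.

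The main obstacle, and the part requiring the most care, will be to demonstrate tightness by constructing explicit diagonals that realize both extremes; Lemma~\ref{lem:copula_diagonal} will then guarantee the existence of corresponding copulas. For the upper bound, I would take
\[
  \Delta_{\max}(u) \vcentcolon= 0 \lor \bigl(u \land (du - (d-1) b)\bigr),
\]
a piecewise-linear function that vanishes on $[0, (d-1)b/d]$, rises with slope $d$ until it meets the identity line, and then follows the identity up to $(1,1)$. A short case split depending on whether $a \leq (d-1)b/d$ verifies that $\Delta_{\max}(b) - \Delta_{\max}(a) = d(b-a) \land b$. For the lower bound, I would set $\Delta_{\min}$ equal to the identity on $[0, a]$, linear on $[a, b]$ reaching the value $a \lor (1 - d(1-b))$ at $u = b$, and linear on $[b, 1]$ with slope $\min\bigl((1-a)/(1-b),\, d\bigr)$ terminating at $(1, 1)$. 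Verifying conditions (i)-(iii) and computing $\Delta_{\min}(b) - \Delta_{\min}(a)$ reduces to elementary inequality manipulations, split according to the sign of $1 - d(1-b) - a$.
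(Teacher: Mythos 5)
Your proposal is correct and follows essentially the same route as the paper: reduce the extremization over $\cP_d(F)$ to an extremization of $\Delta(b)-\Delta(a)$ over copula diagonals characterized by Lemma~\ref{lem:copula_diagonal}, derive the two inequalities from the diagonal's defining properties (the paper uses an equivalent probabilistic union bound), and exhibit explicit piecewise-linear extremal diagonals. Your $\Delta_{\max}$ and $\Delta_{\min}$ are parametrized slightly differently from the paper's $\Delta_\up$ and $\Delta_\lo$ (anchored at $b=F(x+\varepsilon)$ rather than at $a=F(x)$), but they satisfy the required conditions and attain the stated bounds, so the argument goes through.
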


Equation \eqref{eq:common_upper} in Theorem~\ref{thm:common}
gives a tight upper bound on the
probability of the maximum statistic falling in
$(x, x + \varepsilon]$.
Further, \eqref{eq:common_upper} shows that if $F(x) \in (0, 1)$
and $F(x + \varepsilon) - F(x) \leq \frac{F(x)}{d-1}$,
then there exists a joint distribution
$\P_\up$
such that the maximum statistic exhibits
strong local concentration near $x$. That is,
$\P_\up \bigl( x < \max_{i \in [d]} X_i \leq x + \varepsilon \bigr)
= d \bigl(F(x + \varepsilon) - F(x)\bigr)$,
which increases linearly with the dimension $d$;
see \eqref{eq:diagonal_upper} in
Section~\ref{sec:proof_overview}.
If also $F$ admits a Lebesgue density $f$ on
$(x, x + \varepsilon]$ which is
bounded above by $M$ and below by $m$, then
$\varepsilon \leq \frac{F(x)}{M(d-1)}$ implies
$d m \varepsilon \leq
\P_\up \bigl( x < \max_{i \in [d]} X_i \leq x + \varepsilon \bigr)
\leq d M \varepsilon$.
This is a form of ``curse of dimensionality,''
precluding the possibility of obtaining anti-concentration
bounds which hold uniformly in $\varepsilon$ and
depend strictly sublinearly on $d$.
This local concentration phenomenon
can occur at any point $x \in \R$ satisfying $F(x) > 0$;
contrast this with the independent setting, in which
concentration is restricted to regions where $F(x)$ is close to $1$.

For a lower bound on the concentration probability,
\eqref{eq:common_lower} establishes a joint distribution $\P_\lo$
which achieves exact anti-concentration whenever
$F(x + \varepsilon) - F(x) \leq \frac{d-1}{d}\bigl(1-F(x)\bigr)$,
in the sense that
$\P_\lo \bigl( x < \max_{i \in [d]} X_i \leq x + \varepsilon \bigr) = 0$.
If $F$ admits a Lebesgue density $f$ on
$(x, x + \varepsilon]$ which is bounded above by $M$, then
$\varepsilon \leq \frac{d-1}{M d}\bigl(1-F(x)\bigr)$
suffices to ensure this;
see \eqref{eq:diagonal_lower} in Section~\ref{sec:proof_overview}.

In Example~\ref{ex:gaussian_linear} we apply the result from
\eqref{eq:common_upper} with marginally Gaussian random variables.

\begin{example}[Marginal Gaussian distribution]
  \label{ex:gaussian_linear}
  Let $d \in \N$, $\sigma > 0$ and $\varepsilon \in [0, \sigma]$.
  By \eqref{eq:common_upper}, there exists $(X_1, \ldots, X_d)$
  with $X_i \sim \mathcal N(0, \sigma^2)$ for $i \in [d]$ such that
  \begin{align*}
    \sup_{x \in \R}
    \P \biggl( x < \max_{i \in [d]} X_i \leq x + \varepsilon \biggr)
    &\geq
    \frac{d \varepsilon}{\sigma}
    \phi\biggl(\frac{\varepsilon}{\sigma}\biggr)
    \land \Phi\biggl(\frac{\varepsilon}{\sigma}\biggr)
    \geq
    \frac{d \varepsilon}{\sigma}
    \frac{e^{-1/2}}{\sqrt{2 \pi}}
    \land \frac{1}{2}
    \geq
    \frac{d \varepsilon}{5\sigma}
    \land \frac{1}{2}.
  \end{align*}
\end{example}

Compare Example~\ref{ex:gaussian_linear} with Nazarov's inequality
(\citealp{nazarov2003maximal};
see also \citealp{chernozhukov2017detailed}, for a detailed proof)
which, under the assumption of joint Gaussianity, obtains a bound of
\begin{align}
  \label{eq:nazarov}
  \sup_{x \in \R}
  \P \biggl( x < \max_{i \in [d]} X_i \leq x + \varepsilon \biggr)
  \leq
  \frac{\varepsilon}{\sigma}
  \Bigl(\sqrt{2 \log d} + 2\Bigr).
\end{align}
This slow-growing dependence on $d$ is crucial in
high-dimensional statistical applications, where the
dimension may be much larger than the sample size.
Example~\ref{ex:gaussian_linear} shows that marginal Gaussianity
of each $X_i$ alone is insufficient for obtaining such a bound.
Therefore, in the upcoming Section~\ref{sec:convex} we present
a restricted class of copulas for
which sharper anti-concentration inequalities hold
than those given in Theorem~\ref{thm:common}.
We recover a form of Nazarov's inequality
\eqref{eq:nazarov} as a special case.

\subsection{Overview of proof strategy}
\label{sec:proof_overview}

The proof of Theorem~\ref{thm:common} is presented
in Section~\ref{sec:proof_thm_common} and proceeds as follows.
Firstly, we consider the special case where the common law
of each variable is the standard uniform distribution,
and write $U_i$ instead of $X_i$ for clarity.
The joint CDF of
$(U_1, \ldots, U_d)$ is a $d$-dimensional copula $C: [0, 1]^d \to [0, 1]$.
With $\Delta(u) \vcentcolon= C(u, \ldots, u)$
the diagonal section of $C$, for $u \in [0, 1]$ and $\delta \in [0, 1-u]$,
\eqref{eq:copula_diagonal} gives
\begin{align}
  \label{eq:max_diagonal}
  \P \biggl( u < \max_{i \in [d]} U_i \leq u + \delta \biggr)
  = \Delta(u + \delta) - \Delta(u).
\end{align}
Establishing \eqref{eq:common_upper} and \eqref{eq:common_lower}
thus reduces to finding $\Delta_\up$ and $\Delta_\lo$
which maximize and minimize
the right-hand side of \eqref{eq:max_diagonal} respectively over $\Delta$,
subject to the constraints enforced in Lemma~\ref{lem:copula_diagonal}.
The resulting copula diagonals are described in
\eqref{eq:diagonal_upper} and \eqref{eq:diagonal_lower},
and are plotted in Figure~\ref{fig:copulas}.
Fix $d \in \N$ and $u \in [0, 1]$, and for $t \in [0, 1]$ define
\begin{align}
  \label{eq:diagonal_upper}
  \Delta_\up(t)
  &\vcentcolon=
  d \cdot \biggl\{t - \biggl(u \land \frac{d-1}{d}\biggr)\biggr\}
  \cdot \I \biggl\{ u \land \frac{d-1}{d} < t \leq \frac{d u}{d-1} \biggr\}
  + t \cdot \I \biggl\{ \frac{d u}{d-1} \land 1 < t \biggr\}, \\
  \label{eq:diagonal_lower}
  \Delta_\lo(t)
  &\vcentcolon=
  t \cdot \I \{ t \leq u \}
  + u \cdot \I \biggl\{ u < t \leq \frac{d+u-1}{d} \biggr\}
  + (1 - d + d \cdot t)
  \cdot \I \biggl\{ \frac{d+u-1}{d} < t \biggr\}.
\end{align}

For the upper bound
(\ref{eq:diagonal_upper}, Figure~\ref{fig:copula_upper}),
we maximize the increment of
$\Delta$ over $(u, u + \delta]$ to obtain $\Delta_\up$;
for the lower bound
(\ref{eq:diagonal_lower}, Figure~\ref{fig:copula_lower}),
we minimize it, yielding $\Delta_\lo$. Therefore,
\begin{align*}
  \P_\up \Bigl( u < \max_{i \in [d]} U_i \leq u + \delta \Bigr)
  &=
  (d \delta) \land (u + \delta), \\
  \P_\lo \Bigl( u < \max_{i \in [d]} U_i \leq u + \delta \Bigr)
  &=
  0 \lor
  \bigl(1 - u - d(1 - u - \delta)\bigr).
\end{align*}
The generalization to an arbitrary distribution function
$F$ then proceeds by a quantile transform, taking
$u = F(x)$ and $\delta = F(x + \varepsilon) - F(x)$,
and finally setting $X_i = F^{-1}(U_i)$.

Analogous results to those in
Theorem~\ref{thm:common} can be derived with the maximum statistic replaced by
the minimum statistic by considering the variables $-X_i$,
with common CDF $G(x) = 1 - F^-(-x)$.
If $F$ is symmetric in the sense that
$F(x) = 1-F^-(-x)$ for all $x \in \R$, then similar results also hold
for the maximum absolute value statistic
(see Example~\ref{ex:multivariate_gaussian}),
noting that $\max_{i \in [d]} |X_i| = \max_{i \in [d]} (X_i \lor -X_i)$
and applying Theorem~\ref{thm:common} to the $2d$-dimensional vector
$(X_1, \ldots, X_d, -X_1, \ldots, -X_d)$.

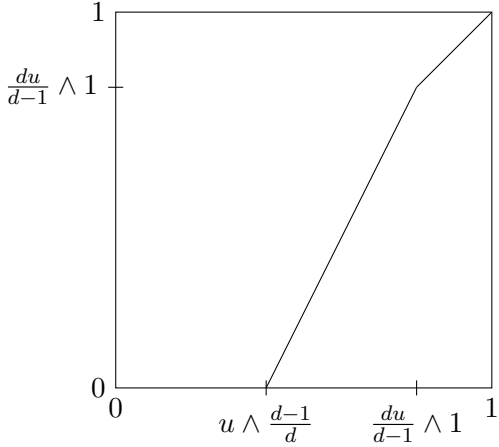
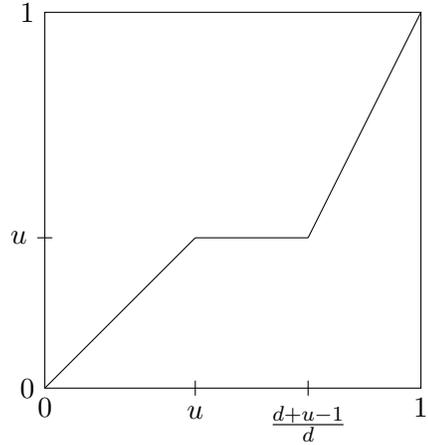
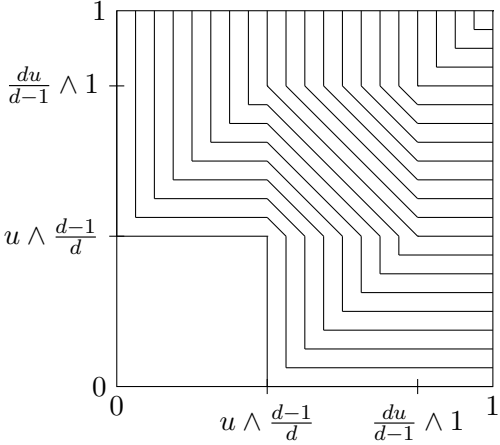
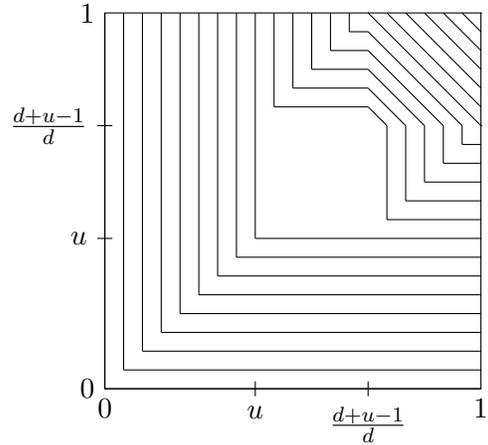
\begin{figure}[H]
  \centering
  \begin{subfigure}{0.49\textwidth}
    \centering
    \scalebox{\figurescale}{%
\begin{tikzpicture}
  \draw (0,0) rectangle (5,5);
  \draw (0,0) node[below] {$0$};
  \draw (5,0) node[below] {$1$};
  \draw (0,5) node[left] {$1$};
  \draw (0,0) node[left] {$0$};
  \draw[-] (2,0.1) -- (2,-0.1) node[below] {$u \land \frac{d-1}{d}$};
  \draw[-] (4,0.1) -- (4,-0.1) node[below] {$\frac{d u}{d-1} \land 1$};
  \draw[-] (0.1,4) -- (-0.1,4) node[left] {$\frac{d u}{d-1} \land 1$};
  \path (0.1,4) -- (-0.1,4) node[left] {\phantom{$\frac{d u}{d-1} \land 1$}};
  \draw (2,0) -- (4,4);
  \draw (4,4) -- (5,5);
\end{tikzpicture}
}
    \caption{The copula diagonal $\Delta_\up$
    defined in \eqref{eq:diagonal_upper}.}
    \label{fig:copula_upper}
    \vspace*{3mm}
  \end{subfigure}
  \begin{subfigure}{0.49\textwidth}
    \centering
    \scalebox{\figurescale}{%
\begin{tikzpicture}
  \draw (0,0) rectangle (5,5);
  \draw (0,0) node[below] {$0$};
  \draw (5,0) node[below] {$1$};
  \draw (0,5) node[left] {$1$};
  \draw (0,0) node[left] {$0$};
  \draw[-] (2,0.1) -- (2,-0.1) node[below] {$u$};
  \draw[-] (3.5,0.1) -- (3.5,-0.1) node[below] {$\frac{d+u-1}{d}$};
  \draw[-] (0.1,2) -- (-0.1,2) node[left] {$u$};
  \path (0.1,3.5) -- (-0.1,3.5) node[left] {\phantom{$\frac{d+u-1}{d}$}};
  \draw (0,0) -- (2,2);
  \draw (2,2) -- (3.5,2);
  \draw (3.5,2) -- (5,5);
\end{tikzpicture}
}
    \caption{The copula diagonal $\Delta_\lo$
    defined in \eqref{eq:diagonal_lower}.}
    \label{fig:copula_lower}
    \vspace*{3mm}
  \end{subfigure}
  \begin{subfigure}{0.49\textwidth}
    \centering
    \scalebox{\figurescale}{%

\begin{tikzpicture}
  \draw (0,0) rectangle (5,5);
  \draw (0,0) node[below] {$0$};
  \draw (5,0) node[below] {$1$};
  \draw (0,5) node[left] {$1$};
  \draw (0,0) node[left] {$0$};
  \draw[-] (2.0,0.1) -- (2.0,-0.1)
  node[below] {$u \land \frac{d-1}{d}$};
  \draw[-] (4.0,0.1) -- (4.0,-0.1)
  node[below] {$\frac{d u}{d-1} \land 1$};
  \draw[-] (0.1,2.0) -- (-0.1,2.0)
  node[left] {$u \land \frac{d-1}{d}$};
  \draw[-] (0.1,4.0) -- (-0.1,4.0)
  node[left] {$\frac{d u}{d-1} \land 1$};
  \draw[-] (0.0,5) -- (0.0,2.0);
  \draw[-] (0.0,2.0) -- (2.0,2.0);
  \draw[-] (2.0,2.0) -- (2.0,2.0);
  \draw[-] (2.0,2.0) -- (2.0,0.0);
  \draw[-] (2.0,0.0) -- (5,0.0);
  \draw[-] (0.25,5) -- (0.25,2.25);
  \draw[-] (0.25,2.25) -- (2.0,2.25);
  \draw[-] (2.0,2.25) -- (2.25,2.0);
  \draw[-] (2.25,2.0) -- (2.25,0.25);
  \draw[-] (2.25,0.25) -- (5,0.25);
  \draw[-] (0.5,5) -- (0.5,2.5);
  \draw[-] (0.5,2.5) -- (2.0,2.5);
  \draw[-] (2.0,2.5) -- (2.5,2.0);
  \draw[-] (2.5,2.0) -- (2.5,0.5);
  \draw[-] (2.5,0.5) -- (5,0.5);
  \draw[-] (0.75,5) -- (0.75,2.75);
  \draw[-] (0.75,2.75) -- (2.0,2.75);
  \draw[-] (2.0,2.75) -- (2.75,2.0);
  \draw[-] (2.75,2.0) -- (2.75,0.75);
  \draw[-] (2.75,0.75) -- (5,0.75);
  \draw[-] (1.0,5) -- (1.0,3.0);
  \draw[-] (1.0,3.0) -- (2.0,3.0);
  \draw[-] (2.0,3.0) -- (3.0,2.0);
  \draw[-] (3.0,2.0) -- (3.0,1.0);
  \draw[-] (3.0,1.0) -- (5,1.0);
  \draw[-] (1.25,5) -- (1.25,3.25);
  \draw[-] (1.25,3.25) -- (2.0,3.25);
  \draw[-] (2.0,3.25) -- (3.25,2.0);
  \draw[-] (3.25,2.0) -- (3.25,1.25);
  \draw[-] (3.25,1.25) -- (5,1.25);
  \draw[-] (1.5,5) -- (1.5,3.5);
  \draw[-] (1.5,3.5) -- (2.0,3.5);
  \draw[-] (2.0,3.5) -- (3.5,2.0);
  \draw[-] (3.5,2.0) -- (3.5,1.5);
  \draw[-] (3.5,1.5) -- (5,1.5);
  \draw[-] (1.75,5) -- (1.75,3.75);
  \draw[-] (1.75,3.75) -- (2.0,3.75);
  \draw[-] (2.0,3.75) -- (3.75,2.0);
  \draw[-] (3.75,2.0) -- (3.75,1.75);
  \draw[-] (3.75,1.75) -- (5,1.75);
  \draw[-] (2.0,5) -- (2.0,4.0);
  \draw[-] (2.0,4.0) -- (2.0,4.0);
  \draw[-] (2.0,4.0) -- (4.0,2.0);
  \draw[-] (4.0,2.0) -- (4.0,2.0);
  \draw[-] (4.0,2.0) -- (5,2.0);
  \draw[-] (2.25,5) -- (2.25,4.0);
  \draw[-] (2.25,4.0) -- (4.0,2.25);
  \draw[-] (4.0,2.25) -- (5,2.25);
  \draw[-] (2.5,5) -- (2.5,4.0);
  \draw[-] (2.5,4.0) -- (4.0,2.5);
  \draw[-] (4.0,2.5) -- (5,2.5);
  \draw[-] (2.75,5) -- (2.75,4.0);
  \draw[-] (2.75,4.0) -- (4.0,2.75);
  \draw[-] (4.0,2.75) -- (5,2.75);
  \draw[-] (3.0,5) -- (3.0,4.0);
  \draw[-] (3.0,4.0) -- (4.0,3.0);
  \draw[-] (4.0,3.0) -- (5,3.0);
  \draw[-] (3.25,5) -- (3.25,4.0);
  \draw[-] (3.25,4.0) -- (4.0,3.25);
  \draw[-] (4.0,3.25) -- (5,3.25);
  \draw[-] (3.5,5) -- (3.5,4.0);
  \draw[-] (3.5,4.0) -- (4.0,3.5);
  \draw[-] (4.0,3.5) -- (5,3.5);
  \draw[-] (3.75,5) -- (3.75,4.0);
  \draw[-] (3.75,4.0) -- (4.0,3.75);
  \draw[-] (4.0,3.75) -- (5,3.75);
  \draw[-] (4.0,5) -- (4.0,4.0);
  \draw[-] (4.0,4.0) -- (4.0,4.0);
  \draw[-] (4.0,4.0) -- (5,4.0);
  \draw[-] (4.25,5) -- (4.25,4.25);
  \draw[-] (4.25,4.25) -- (5,4.25);
  \draw[-] (4.5,5) -- (4.5,4.5);
  \draw[-] (4.5,4.5) -- (5,4.5);
  \draw[-] (4.75,5) -- (4.75,4.75);
  \draw[-] (4.75,4.75) -- (5,4.75);
  \draw[-] (5.0,5) -- (5.0,5.0);
  \draw[-] (5.0,5.0) -- (5,5.0);
\end{tikzpicture}
}
    \caption{Extension of $\Delta_\up$ to a copula with $d=2$.}
    \label{fig:copula_2d_upper}
  \end{subfigure}
  \begin{subfigure}{0.49\textwidth}
    \centering
    \scalebox{\figurescale}{%

\begin{tikzpicture}
  \draw (0,0) rectangle (5,5);
  \draw (0,0) node[below] {$0$};
  \draw (5,0) node[below] {$1$};
  \draw (0,5) node[left] {$1$};
  \draw (0,0) node[left] {$0$};
  \draw[-] (2.0,0.1) -- (2.0,-0.1) node[below] {$u$};
  \draw[-] (0.1,2.0) -- (-0.1,2.0) node[left] {$u$};
  \draw[-] (3.5,0.1) -- (3.5,-0.1)
  node[below] {$\frac{d+u-1}{d}$};
  \draw[-] (0.1,3.5) -- (-0.1,3.5)
  node[left] {$\frac{d+u-1}{d}$};
  \draw[-] (0.0,5) -- (0.0,0.0);
  \draw[-] (0.0,0.0) -- (5,0.0);
  \draw[-] (0.25,5) -- (0.25,0.25);
  \draw[-] (0.25,0.25) -- (5,0.25);
  \draw[-] (0.5,5) -- (0.5,0.5);
  \draw[-] (0.5,0.5) -- (5,0.5);
  \draw[-] (0.75,5) -- (0.75,0.75);
  \draw[-] (0.75,0.75) -- (5,0.75);
  \draw[-] (1.0,5) -- (1.0,1.0);
  \draw[-] (1.0,1.0) -- (5,1.0);
  \draw[-] (1.25,5) -- (1.25,1.25);
  \draw[-] (1.25,1.25) -- (5,1.25);
  \draw[-] (1.5,5) -- (1.5,1.5);
  \draw[-] (1.5,1.5) -- (5,1.5);
  \draw[-] (1.75,5) -- (1.75,1.75);
  \draw[-] (1.75,1.75) -- (5,1.75);
  \draw[-] (2.0,5) -- (2.0,2.0);
  \draw[-] (2.0,2.0) -- (5,2.0);
  \draw[-] (2.25,5) -- (2.25,3.75);
  \draw[-] (2.25,3.75) --(3.5,3.75);
  \draw[-] (3.5,3.75) --(3.75,3.5);
  \draw[-] (3.75,3.5) --(3.75,2.25);
  \draw[-] (3.75,2.25) -- (5,2.25);
  \draw[-] (2.5,5) -- (2.5,4.0);
  \draw[-] (2.5,4.0) --(3.5,4.0);
  \draw[-] (3.5,4.0) --(4.0,3.5);
  \draw[-] (4.0,3.5) --(4.0,2.5);
  \draw[-] (4.0,2.5) -- (5,2.5);
  \draw[-] (2.75,5) -- (2.75,4.25);
  \draw[-] (2.75,4.25) --(3.5,4.25);
  \draw[-] (3.5,4.25) --(4.25,3.5);
  \draw[-] (4.25,3.5) --(4.25,2.75);
  \draw[-] (4.25,2.75) -- (5,2.75);
  \draw[-] (3.0,5) -- (3.0,4.5);
  \draw[-] (3.0,4.5) --(3.5,4.5);
  \draw[-] (3.5,4.5) --(4.5,3.5);
  \draw[-] (4.5,3.5) --(4.5,3.0);
  \draw[-] (4.5,3.0) -- (5,3.0);
  \draw[-] (3.25,5) -- (3.25,4.75);
  \draw[-] (3.25,4.75) --(3.5,4.75);
  \draw[-] (3.5,4.75) --(4.75,3.5);
  \draw[-] (4.75,3.5) --(4.75,3.25);
  \draw[-] (4.75,3.25) -- (5,3.25);
  \draw[-] (3.5,5) -- (3.5,5.0);
  \draw[-] (3.5,5.0) --(3.5,5.0);
  \draw[-] (3.5,5.0) --(5.0,3.5);
  \draw[-] (5.0,3.5) --(5.0,3.5);
  \draw[-] (5.0,3.5) -- (5,3.5);
  \draw[-] (3.75,5) -- (5,3.75);
  \draw[-] (4.0,5) -- (5,4.0);
  \draw[-] (4.25,5) -- (5,4.25);
  \draw[-] (4.5,5) -- (5,4.5);
  \draw[-] (4.75,5) -- (5,4.75);
  \draw[-] (5.0,5) -- (5,5.0);
\end{tikzpicture}
}
    \caption{Extension of $\Delta_\lo$ to a copula with $d=2$.}
    \label{fig:copula_2d_lower}
  \end{subfigure}
  \caption{
    Top: the two $d$-dimensional copula diagonals
    \eqref{eq:diagonal_upper} and \eqref{eq:diagonal_lower}
    constructed to prove \eqref{eq:common_upper}
    and \eqref{eq:common_lower}
    respectively in Theorem~\ref{thm:common}.
    For the upper bound
    (\subref{fig:copula_upper}), the increment over
    $(u, u + \delta]$ is maximized, while for the lower bound
    (\subref{fig:copula_lower}) it is minimized.
    Bottom:
    contour plots for possible two-dimensional
    ($d=2$) copulas
    (\subref{fig:copula_2d_upper}) and (\subref{fig:copula_2d_lower})
    whose diagonals are given
    by $\Delta_\up$ and $\Delta_\lo$ respectively.
    We use the extension due to
    \cite[proof of Theorem~1]{fernandez2018constructions},
    though this is not unique in general.
    Recall that every copula satisfies
    $C(0, \ldots, 0) = 0$ and $C(1, \ldots, 1) = 1$.
  }
  \label{fig:copulas}
\end{figure}

\subsection{Comparisons with other well-known copulas}

We provide some comparisons of the anti-concentration properties
established in Theorem~\ref{thm:common} with those
induced by other well-known copulas (see Figure~\ref{fig:copulas_comparison}).
For simplicity, we restrict to the case that
$X_i \sim \cU$ are uniformly distributed;
extensions to arbitrary common laws
proceed using a straightforward quantile transform,
as in the proof of Theorem~\ref{thm:common}.

\begin{example}[Independence copula]
  If $X_i \sim \cU$ are independent
  for $i \in [d]$, then we have that
  $\P_\ind \bigl( x < \max_{i \in [d]} X_i \leq x + \varepsilon \bigr)
  = (x + \varepsilon)^d - x^d$.
  Taking $x \in (0, 1)$ and $\varepsilon \leq \frac{x}{d-1} \land (1-x)$,
  \begin{align*}
    \P_\ind \biggl( x < \max_{i \in [d]} X_i \leq x + \varepsilon \biggr)
    \leq
    d \varepsilon (x + \varepsilon)^{d-1}
    \leq
    d \varepsilon x^{d-1} \Bigl(1 + \frac{1}{d-1}\Bigr)^{d-1}
    \leq
    e d \varepsilon x^{d-1}.
  \end{align*}
  In contrast, the law $\P_\up$
  attaining the maximum in \eqref{eq:common_upper} has
  $\P_\up \bigl(x < \max_{i \in [d]} X_i \leq x + \varepsilon\bigr)
  = d \varepsilon$; its local concentration probability is greater
  by a factor of at least $x^{1-d}/e \to \infty$ as $d \to \infty$.

  If instead one takes $\varepsilon \in (1/d, 1)$ and
  $x = 1 - \varepsilon$, then for
  the independence copula one obtains
  $\P_\ind \bigl( x < \max_{i \in [d]} X_i \leq x + \varepsilon \bigr)
  = 1 - (1 - \varepsilon)^d$
  while \eqref{eq:common_upper} gives
  $\P_\up \bigl( x < \max_{i \in [d]} X_i \leq x + \varepsilon \bigr) = 1$.
  Although both exhibit concentration of the maximum statistic
  at $x=1$ as expected,
  the independence
  copula does not attain exact concentration.

  Regarding lower bounds, if $x \in (0, 1)$ and $\varepsilon \in (0, 1-x]$,
  then the independence copula gives
  $\P_\ind \bigl( x < \max_{i \in [d]} X_i \leq x + \varepsilon \bigr)
  \geq d \varepsilon x^{d-1} > 0$.
  In contrast, whenever $\varepsilon \leq \frac{d-1}{d}(1-x)$,
  the law $\P_\lo$ achieving the minimum in \eqref{eq:common_lower} satisfies
  $\P_\lo \bigl( x < \max_{i \in [d]} X_i \leq x + \varepsilon \bigr) = 0$.
\end{example}

\begin{example}[Fr{\'e}chet--Hoeffding upper bound]
  Write $\P_\FHU$ for the joint law of $X_1 = \cdots = X_d \sim \cU$.
  For $x \in [0, 1)$ and $\varepsilon \in [0, 1-x]$, we have
  $\P_\FHU \bigl( x < \max_{i \in [d]} \leq x + \varepsilon \bigr)
  = \varepsilon$.
  Since $0 \lor \bigl(1 - x - d(1 - x - \varepsilon)\bigr) \leq
  \varepsilon \leq (d \varepsilon) \land (x + \varepsilon)$,
  the Fr{\'e}chet--Hoeffding upper bound copula
  interpolates between the upper bound \eqref{eq:common_upper}
  and the lower bound \eqref{eq:common_lower} of Theorem~\ref{thm:common}.
\end{example}

\begin{example}[Fr{\'e}chet--Hoeffding lower bound]
  Let $\P_\FHL$ be any joint distribution of $X_i \sim \cU$
  for $i \in [d]$ with copula diagonal satisfying
  $\Delta(x) = 0 \lor (d \cdot x - d + 1)$.
  If $x \geq \frac{d-1}{d}$ and $\varepsilon \in [0, 1-x]$, then
  $\P_\FHL \bigl( x < \max_{i \in [d]} X_i \leq x + \varepsilon \bigr)
  = d \varepsilon$,
  matching \eqref{eq:common_upper}. If
  $x \in \bigl[0,  \frac{d-1}{d}\bigr]$ and
  $\varepsilon \in \bigl[0, \frac{d-1}{d} - x\bigr]$, then
  $\P_\FHL \bigl( x < \max_{i \in [d]} X_i \leq x + \varepsilon \bigr) = 0$,
  agreeing with \eqref{eq:common_lower}.
\end{example}

\begin{figure}[H]
  \centering
  \begin{subfigure}{0.32\textwidth}
    \centering
    \scalebox{\figurescale}{%
\begin{tikzpicture}
  \draw (0,0) rectangle (3.5,3.5);
  \draw (0,0) node[below] {$0$};
  \draw (3.5,0) node[below] {$1$};
  \draw (0,3.5) node[left] {$1$};
  \draw (0,0) node[left] {$0$};
  \draw (0,-0.1) node[below] {\phantom{$\frac{d-1}{d}$}};
  \draw plot coordinates {
    (0.0, 0.0) (0.0875, 1.3671875000000002e-06) (0.175,
    2.1875000000000003e-05) (0.2625, 0.0001107421875) (0.35,
    0.00035000000000000005) (0.4375, 0.0008544921875) (0.525,
    0.001771875) (0.6125, 0.0032826171874999993) (0.7,
    0.005600000000000001) (0.7875, 0.0089701171875) (0.875,
    0.013671875) (0.9625, 0.020016992187500006) (1.05, 0.02835)
    (1.1375, 0.039048242187500005) (1.225, 0.05252187499999999)
    (1.3125, 0.0692138671875) (1.4, 0.08960000000000001) (1.4875,
    0.11418886718749997) (1.575, 0.143521875) (1.6625,
    0.17817324218749997) (1.75, 0.21875) (1.8375,
    0.26589199218750004) (1.925, 0.3202718750000001) (2.0125,
    0.3825951171874999) (2.1, 0.4536) (2.1875, 0.5340576171875)
    (2.275, 0.6247718750000001) (2.3625, 0.7265794921875002) (2.45,
    0.8403499999999998) (2.5375, 0.9669857421875) (2.625,
    1.107421875) (2.7125, 1.2626263671875) (2.8, 1.4336000000000002)
    (2.8875, 1.6213763671874997) (2.975, 1.8270218749999996) (3.0625,
    2.0516357421875) (3.15, 2.29635) (3.2375, 2.5623294921875006)
    (3.325, 2.8507718749999995) (3.4125, 3.1629076171874995) (3.5, 3.5)
  };
\end{tikzpicture}
}
    \caption{Independence}
  \end{subfigure}
  \begin{subfigure}{0.32\textwidth}
    \centering
    \scalebox{\figurescale}{%
\begin{tikzpicture}
  \draw (0,0) rectangle (3.5,3.5);
  \draw (0,0) node[below] {$0$};
  \draw (3.5,0) node[below] {$1$};
  \draw (0,3.5) node[left] {$1$};
  \draw (0,0) node[left] {$0$};
  \draw (0,-0.1) node[below] {\phantom{$\frac{d-1}{d}$}};
  \draw (0,0) -- (3.5,3.5);
\end{tikzpicture}
}
    \caption{Upper Fr{\'e}chet--Hoeffding}
  \end{subfigure}
  \begin{subfigure}{0.32\textwidth}
    \centering
    \scalebox{\figurescale}{%
\begin{tikzpicture}
  \draw (0,0) rectangle (3.5,3.5);
  \draw (0,0) node[below] {$0$};
  \draw (3.5,0) node[below] {$1$};
  \draw (0,3.5) node[left] {$1$};
  \draw (0,0) node[left] {$0$};
  \draw[-] (2,0.1) -- (2,-0.1) node[below] {$\frac{d-1}{d}$};
  \draw (2,0) -- (3.5,3.5);
\end{tikzpicture}
}
    \caption{Lower Fr{\'e}chet--Hoeffding}
  \end{subfigure}
  \caption{The diagonal sections of three well-known
  $d$-dimensional copulas.}
  \label{fig:copulas_comparison}
\end{figure}
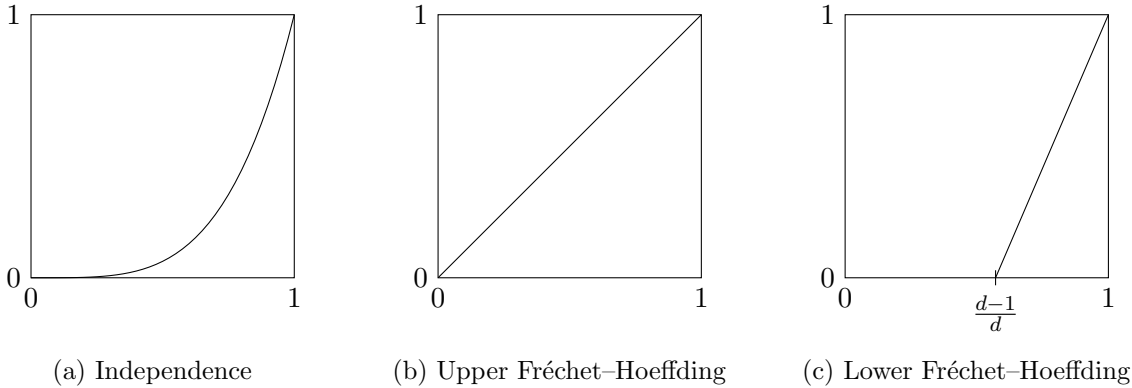

\section{Anti-concentration inequalities for diagonally convex copulas}
\label{sec:convex}

The upper bound presented as \eqref{eq:common_upper} in
Theorem~\ref{thm:common} demonstrates that, without imposing
further conditions on the
dependence structure (the copula) of the random vector $(X_1, \ldots, X_d)$,
it is impossible to obtain anti-concentration results
which hold uniformly over $\varepsilon \geq 0$ and exhibit
a strictly sublinear dependence on the dimension
(see Example~\ref{ex:gaussian_linear}).
As such, in order to obtain sharper upper bounds on
the concentration probability, it is necessary to restrict the
class of admissible copulas.
For example, as discussed in Section~\ref{sec:arbitrary},
in the setting where $(X_1, \ldots, X_d)$
follows a multivariate Gaussian law,
Nazarov's inequality can be applied to the maximum statistic
and produces a bound \eqref{eq:nazarov} with a square root-logarithmic
dependence on the dimension.

Nonetheless, in this section, we propose a method which avoids the
assumption of multivariate (joint) Gaussianity,
replacing it with a mild nonparametric convexity condition on the
copula describing the dependence structure
(see Definition~\ref{def:diagonally_convex}).
We also allow for an arbitrary common marginal distribution;
as such, we encompass a substantially wider range of joint distributions
than those covered by Nazarov's inequality.
See the upcoming
Examples~\ref{ex:convex_gaussian},
\ref{ex:multivariate_gaussian},
\ref{ex:weibull_convex}, \ref{ex:gumbel_convex} and \ref{ex:pareto_convex}
for a selection of novel anti-concentration
inequalities derived using our results.

\begin{definition}
  \label{def:diagonally_convex}
  Let $d \in \N$ and $F: \R \to [0, 1]$ be a CDF.
  Write $\cP_d^\conv(F)$ for the set of distributions
  $\P$ on $\R^d$ that have joint CDFs of the form
  \begin{align*}
    \P \bigl( X_1 \leq x_1, \ldots, X_d \leq x_d \bigr)
    &= C\bigl(F(x_1), \ldots, F(x_d)\bigr),
  \end{align*}
  where $C$ is a $d$-dimensional copula for which
  $\Delta: [0, 1] \to [0, 1]$ defined by
  $\Delta(x) = C(x, \ldots, x)$ is a convex function.
  We say that $\P$ and $C$ are \emph{diagonally convex}.
\end{definition}

\begin{theorem}%
  \label{thm:convex}
  Let $d \in \N$ and $F: \R \to [0, 1]$ be a CDF.
  For each $x \in \R$ and $\varepsilon \geq 0$,
  \begin{align*}
    \max_{\P \in \cP_d^\conv(F)}
    \P \biggl( x < \max_{i \in [d]} X_i \leq x + \varepsilon \biggr)
    &=
    \bigl( F(x + \varepsilon) - F(x) \bigr)
    \biggl\{
      \frac{1}{1 - F(x)}
      \land d
    \biggr\}.
  \end{align*}
\end{theorem}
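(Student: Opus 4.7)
The plan is to mirror the two-step structure of the proof of Theorem~\ref{thm:common}. First, I reduce to uniform marginals via the quantile transform $X_i = F^{-1}(U_i)$. Setting $u \vcentcolon= F(x)$ and $\delta \vcentcolon= F(x+\varepsilon) - F(x) \in [0, 1-u]$, equation~\eqref{eq:max_diagonal} rewrites the target probability as the increment $\Delta(u+\delta) - \Delta(u)$ of a copula diagonal. The class $\cP_d^\conv(F)$ then corresponds exactly to the set of convex $\Delta$ satisfying (i)--(iii) of Lemma~\ref{lem:copula_diagonal}, and the problem becomes to maximize $p \vcentcolon= \Delta(u+\delta) - \Delta(u)$ over this restricted class.

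For the upper bound, I would combine two inequalities. Condition (iii) of Lemma~\ref{lem:copula_diagonal} gives $p \leq d\delta$ directly. Separately, when $u+\delta < 1$, convexity of $\Delta$ yields the standard three-slope inequality
\begin{align*}
  \frac{\Delta(u+\delta) - \Delta(u)}{\delta}
  \leq \frac{\Delta(1) - \Delta(u+\delta)}{1 - u - \delta}.
\end{align*}
Cross-multiplying and substituting $\Delta(u+\delta) = \Delta(u) + p$ simplifies to $p(1-u) \leq \delta(1-\Delta(u))$, whence $p \leq \delta/(1-u)$ since $\Delta \geq 0$. The boundary case $u+\delta = 1$ reduces to $p = 1 - \Delta(u)$, which obeys $p \leq \min\{1, d(1-u)\}$ by (iii) and $\Delta \geq 0$. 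Taking the minimum of the two bounds and multiplying by $\delta$ yields the claimed upper bound after undoing the quantile transform.

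To prove tightness I would exhibit a diagonally convex copula attaining the bound in each regime. When $1-u \geq 1/d$, the piecewise-linear diagonal $\Delta^*(t) \vcentcolon= 0 \lor (t-u)/(1-u)$ is convex, satisfies $\Delta^*(1)=1$, $\Delta^*(t)\leq t$, and is Lipschitz with constant $1/(1-u) \leq d$, giving $p = \delta/(1-u)$. When $1-u < 1/d$, the Fr\'echet--Hoeffding lower-bound diagonal $\Delta^*(t) \vcentcolon= 0 \lor (dt - d + 1)$ is convex and $d$-Lipschitz, and since $[u, u+\delta] \subseteq [(d-1)/d, 1]$, it yields $p = d\delta$. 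In both cases Lemma~\ref{lem:copula_diagonal} (invoking the explicit extension of \cite{fernandez2018constructions}) produces an actual $d$-dimensional copula with the prescribed diagonal, which the quantile transform then lifts back into $\cP_d^\conv(F)$.

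The main obstacle is recognizing which lever to pull in the convexity argument: one must compare the slope on $[u,u+\delta]$ against the slope on the full tail $[u+\delta, 1]$ (not on some shorter interval) and then exploit $\Delta(u) \geq 0$ rather than $\Delta(u) \leq u$, since the latter substitution would give only the weaker bound $p \leq \delta$ and miss the essential $1/(1-u)$ factor. Once the correct bound is identified, the construction of tight diagonals is straightforward, and the two regimes meet continuously at the threshold $u = (d-1)/d$ where $1/(1-u) = d$.
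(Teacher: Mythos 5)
Your proposal is correct and follows essentially the same route as the paper: the three-slope convexity inequality you use is algebraically identical to the paper's convex-combination bound $\Delta(u+\delta) \leq \frac{1-u-\delta}{1-u}\Delta(u) + \frac{\delta}{1-u}$ (both reduce to $p(1-u) \leq \delta(1-\Delta(u))$, then $\Delta(u)\geq 0$), and your two extremal diagonals are exactly the two cases of the paper's single formula $\Delta_u(t) = \{t - (u \land \frac{d-1}{d})\}(\frac{1}{1-u}\land d)\,\I\{u \land \frac{d-1}{d} < t\}$. Your observation that one must use $\Delta(u)\geq 0$ rather than $\Delta(u)\leq u$ correctly identifies the key point of the argument.
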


The upper bound given in Theorem~\ref{thm:convex} holds
uniformly over all diagonally convex copulas
and, moreover, imposes no conditions on the common marginal law $F$.
The proof of Theorem~\ref{thm:convex} is presented in
Section~\ref{sec:proof_thm_convex} and relies only on
convexity arguments.

In cases where $F$ admits a decreasing Lebesgue density $f$,
the dimension-dependence of an anti-concentration bound
derived using Theorem~\ref{thm:convex} is determined by the quantity
$H(x) \vcentcolon= h(x) \land \bigl\{d \cdot f(x)\bigr\}$,
where $h(x) \vcentcolon= f(x) / \bigl(1-F(x)\bigr)$
is the hazard function (or inverse Mills ratio) associated with $F$.
Typically, if $h$ is an increasing function
(sometimes referred to as an ``increasing failure rate'' condition),
then the maximum value of $H(x)$ is attained at a point $x^*$ with
$h(x^*) = d \cdot f(x^*)$, or equivalently with $x^* = F^{-1}(1 - 1/d)$,
yielding a uniform upper bound of
$H(x) \leq d \cdot f\bigl( F^{-1}(1 - 1/d)\bigr)$.
If the hazard function $h$ is instead decreasing
(known as a ``decreasing failure rate'' condition),
then, generally, a dimension-independent bound is obtained;
see Section~\ref{sec:examples_specific} for examples.

\subsection{Examples of diagonally convex copulas}

Before applying Theorem~\ref{thm:convex} with some explicit marginal
laws, we first verify that several popular copula families
satisfy the convex diagonal section condition given in
Definition~\ref{def:diagonally_convex}.
Naturally, the two copulas
\eqref{eq:common_upper} and \eqref{eq:common_lower}
constructed in Theorem~\ref{thm:common}
do \emph{not} generally satisfy this assumption, as evidenced by
the plots of their diagonal sections presented in Figure~\ref{fig:copulas}.
We verify in Example~\ref{ex:frechet-hoeffding-convex}
that diagonal convexity does hold for the independence copula, the
Fr{\'e}chet--Hoeffding upper bound copula,
and any copula with diagonal section matching the
Fr{\'e}chet--Hoeffding lower bound;
see Figure~\ref{fig:copulas_comparison}.

\begin{example}[Diagonally convex copulas]
  \label{ex:frechet-hoeffding-convex}
  The $d$-dimensional independence copula has diagonal section
  $\Delta_\ind(u) = u^d$ and is diagonally convex.
  Similarly, the $d$-dimensional Fr{\'e}chet--Hoeffding upper
  bound copula has diagonal
  $\Delta_\FHU(u) = u$ and is diagonally convex.
  Any copula with diagonal matching the
  $d$-dimensional Fr{\'e}chet--Hoeffding lower bound
  has $\Delta_\FHL(u) = 0 \lor (d u - d + 1)$ and is diagonally convex.
\end{example}

Next, Lemma~\ref{lem:gaussian_copula_convex} demonstrates that
every multivariate Gaussian copula is diagonally convex.
The proof of this result is given in
Section~\ref{sec:proof_lem_gaussian_copula_convex},
and depends on a precise characterization of the Lebesgue density
associated with the maximum statistic of a
multivariate Gaussian distribution \citep[Lemmas~5
and~6]{chernozhukov2015comparison}.

\begin{lemma}
  \label{lem:gaussian_copula_convex}
  Let $\mu \in \R^d$ and suppose $\Sigma \in \R^{d \times d}$
  is a symmetric, positive semi-definite matrix.
  Then $\mathcal N(\mu, \Sigma)$ has a diagonally convex copula.
\end{lemma}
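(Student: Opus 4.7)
The plan is to reduce diagonal convexity to a monotonicity property of the density of the Gaussian maximum. Since a copula is invariant under strictly increasing transformations of the marginals, I may assume without loss of generality that $\mu = 0$ and that $\Sigma$ is a correlation matrix with unit diagonal (any coordinate with $\Sigma_{ii} = 0$ is deterministic and can be dropped). Writing $W = (W_1, \ldots, W_d) \sim \cN(0, \Sigma)$, so that each $W_i \sim \cN(0, 1)$, the diagonal section of the copula becomes
\[
  \Delta(u)
  = \P\bigl( W_i \leq \Phi^{-1}(u) \text{ for all } i \in [d] \bigr)
  = g\bigl( \Phi^{-1}(u) \bigr),
  \qquad u \in (0, 1),
\]
where $g$ is the CDF of $M \vcentcolon= \max_{i \in [d]} W_i$.

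Assume temporarily that $\Sigma$ is strictly positive definite, so that $M$ admits a Lebesgue density $f_M$. The chain rule gives $\Delta'(u) = f_M\bigl(\Phi^{-1}(u)\bigr) / \phi\bigl(\Phi^{-1}(u)\bigr)$, and since $\Phi^{-1}$ is strictly increasing, $\Delta$ is convex on $(0, 1)$ if and only if the ratio $t \mapsto f_M(t) / \phi(t)$ is non-decreasing on $\R$. By the classical density formula for the maximum of a Gaussian vector \citep[Lemmas~5 and~6]{chernozhukov2015comparison}, one has $f_M(t) = \phi(t) \sum_{i=1}^d h_i(t)$ where $h_i(t) \vcentcolon= \P\bigl( W_j \leq t \text{ for all } j \neq i \bigm| W_i = t \bigr)$. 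It therefore suffices to show that each $h_i$ is non-decreasing.

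This is the heart of the argument. By the Gaussian conditioning formula, the conditional distribution of $(W_j)_{j \neq i}$ given $W_i = t$ is Gaussian with mean vector $(\Sigma_{ij} t)_{j \neq i}$ and a covariance matrix that does not depend on $t$. Centering by the conditional mean, there exists a zero-mean Gaussian vector $V = (V_j)_{j \neq i}$, whose law does not depend on $t$, such that
\[
  h_i(t)
  = \P\bigl( V_j \leq (1 - \Sigma_{ij}) t \text{ for all } j \neq i \bigr).
\]
Since $\Sigma_{ij} \in [-1, 1]$ (these are correlations), each coefficient $1 - \Sigma_{ij}$ is non-negative, so every threshold is a non-decreasing function of $t$, and hence $h_i$ is non-decreasing. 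Summing over $i$ establishes the desired monotonicity of $f_M / \phi$ and therefore the convexity of $\Delta$.

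The main obstacle is the degenerate case where $\Sigma$ is only positive semi-definite, since then $M$ may fail to admit a density and the formula above does not apply directly. I would handle this by perturbing to the positive-definite correlation matrix $\Sigma_\varepsilon \vcentcolon= (1 - \varepsilon) \Sigma + \varepsilon I_d$, applying the argument above to obtain convex diagonal sections $\Delta_\varepsilon$, and letting $\varepsilon \downarrow 0$. Weak convergence of $\cN(0, \Sigma_\varepsilon)$ to $\cN(0, \Sigma)$ yields pointwise convergence $\Delta_\varepsilon \to \Delta$ on $(0, 1)$, and pointwise limits of convex functions are convex; combined with the boundary values $\Delta(0) = 0$ and $\Delta(1) = 1$ guaranteed by Lemma~\ref{lem:copula_diagonal}, this completes the proof.
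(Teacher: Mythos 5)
Your proof is correct and follows the same high-level strategy as the paper: reduce to $\mu = 0$, unit-variance margins, write $\Delta(u) = \P\bigl(\max_i W_i \leq \Phi^{-1}(u)\bigr)$, and show $\Delta'(u) = f_M\bigl(\Phi^{-1}(u)\bigr) / \phi\bigl(\Phi^{-1}(u)\bigr)$ is non-decreasing. Where you differ is in how the key monotonicity fact is established. The paper cites Lemmas 5 and 6 of \cite{chernozhukov2015comparison} for the statement $f_M(t) = \phi(t) g(t)$ with $g$ increasing and stops there; you re-derive the underlying argument from scratch by writing $f_M(t) = \phi(t)\sum_i h_i(t)$ with $h_i(t) = \P(W_j \leq t \ \forall j \neq i \mid W_i = t)$ and using Gaussian conditioning to exhibit $h_i(t) = \P\bigl(V_j \leq (1-\Sigma_{ij})t \ \forall j\bigr)$, which is transparently monotone since $1 - \Sigma_{ij} \geq 0$. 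This is a genuinely more self-contained route and makes clear exactly where the Gaussian structure enters. You also handle the degenerate (non-strictly-PD) correlation matrix case by a perturbation-and-pointwise-limit argument, which is cleaner than the paper's approach (the paper explicitly reduces out the $\Sigma_{ij}=1$ pairs and then relies on the cited lemmas to cover any remaining degeneracy). The one place where your argument is slightly underspecified is the phrase ``any coordinate with $\Sigma_{ii} = 0$ can be dropped'': a copula of a distribution with a degenerate marginal is not unique, and the lemma asks you to \emph{exhibit} a diagonally convex one. The paper fills this in by writing down $C(x_1, \ldots, x_d) = x_1 \land C'(x_2, \ldots, x_d)$ and checking $C(x, \ldots, x) = C'(x, \ldots, x)$; you should add this one-line construction (or note that it is standard) to make the reduction airtight.
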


We now give a general condition under which every member of
a family of Archimedean copulas possesses a convex diagonal section.
Let $\psi: [0, 1] \to [0, \infty]$
be a continuous, strictly decreasing function with
$\psi(0) = \infty$ and $\psi(1) = 0$, and suppose that
its inverse function $\psi^{-1}$ is
completely monotonic on $[0, \infty)$, as defined by
\citet[Definition~4.6.1]{nelsen2006}.
Then for each $d \geq 1$, the function
from $[0, 1]^d$ to $[0, 1]$ given by
\begin{align}
  \label{eq:archimedean}
  C(x_1, \ldots, x_d)
  &\vcentcolon=
  \psi^{-1} \Biggl( \sum_{i=1}^{d} \psi(x_i) \Biggr)
\end{align}
is a $d$-dimensional copula, known as the
\emph{Archimedean copula with generator $\psi$}.

\begin{lemma}
  \label{lem:convex_archimedean}
  Let $C$ be a $d$-dimensional Archimedean copula
  with generator $\psi$.
  Suppose that the function $\Psi : (0, \infty) \to \R$, defined by
  \begin{align*}
    \Psi(x)
    &\vcentcolon= \frac{d \cdot \psi' \circ \psi^{-1}(x)}
    {\psi' \circ \psi^{-1}(d \cdot x)}
    = \frac{\bigl(\psi^{-1}\bigr)'(d \cdot x)}
    {\bigl(\psi^{-1}\bigr)'(x)},
  \end{align*}
  is non-increasing. Then $C$ is diagonally convex.
\end{lemma}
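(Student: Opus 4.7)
The plan is to leverage the Archimedean representation of $C$ to compute the diagonal section in closed form and then reduce convexity to monotonicity of $\Delta'$. From~\eqref{eq:archimedean}, the diagonal is
\[
  \Delta(u) = C(u, \ldots, u) = \psi^{-1}\bigl(d\, \psi(u)\bigr),
  \qquad u \in [0, 1].
\]
Because $\psi$ is differentiable on $(0,1)$ with $\psi'<0$, the inverse function theorem yields $(\psi^{-1})'(y) = 1/\psi'(\psi^{-1}(y))$ on the range of $\psi$; this also reconciles the two equivalent expressions defining $\Psi$. Applying the chain rule on $(0,1)$ gives
\[
  \Delta'(u) = d\, \psi'(u) \cdot (\psi^{-1})'\bigl(d\, \psi(u)\bigr)
  = \frac{d\, \psi'(u)}{\psi'\bigl(\psi^{-1}(d\, \psi(u))\bigr)},
\]
and substituting $x = \psi(u)$ shows that this coincides with $\Psi(\psi(u))$.

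Convexity of $\Delta$ on $(0,1)$ is therefore equivalent to $\Psi \circ \psi$ being non-decreasing. The key observation is then a simple monotonicity composition: $\psi$ is strictly decreasing on $(0,1)$ and, by hypothesis, $\Psi$ is non-increasing on $(0,\infty)$, so their composition is non-decreasing. Hence $\Delta'$ is non-decreasing on $(0,1)$, which gives convexity of $\Delta$ on the open interval. Because every copula diagonal is continuous by Lemma~\ref{lem:copula_diagonal}, convexity extends from $(0,1)$ to the closed interval $[0,1]$, completing the argument.

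The proof reduces to a single chain rule together with a composition-of-monotone-functions observation, so no substantive obstacle is anticipated. The only points requiring care are sign bookkeeping—since $\psi'<0$ and $\psi$ itself is decreasing, one must track which compositions preserve or reverse monotonicity (noting in passing that $\Psi > 0$ as a ratio of two negative quantities, consistent with $\Delta$ being non-decreasing)—and a brief check that $d\, \psi(u)$ lies in the range of $\psi$ for every $u \in (0,1)$, which is guaranteed by the boundary conditions $\psi(0^+) = \infty$ and $\psi(1^-) = 0$ together with continuity of $\psi$.
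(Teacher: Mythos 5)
Your proposal is correct and follows essentially the same route as the paper: compute $\Delta(u)=\psi^{-1}(d\,\psi(u))$, differentiate via the inverse function theorem to get $\Delta'=\Psi\circ\psi$, and conclude that a non-increasing $\Psi$ composed with the strictly decreasing $\psi$ is non-decreasing, hence $\Delta$ is convex. The only (immaterial) differences are that you extend convexity to $[0,1]$ via continuity of the diagonal rather than the paper's endpoint-value check, and your identity $\Delta'=\Psi\circ\psi$ is in fact the correct bookkeeping (the paper's displayed $\Delta'=d\cdot\Psi\circ\psi$ carries a spurious factor of $d$, harmless for monotonicity).
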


We verify in the next example that several popular
families of Archimedean copulas
\citep[Examples~4.23--4.25]{nelsen2006} satisfy the conditions of
Lemma~\ref{lem:convex_archimedean}, and hence are
diagonally convex. The details are contained in
Section~\ref{sec:proofs_examples}.

\begin{example}[Archimedean copulas]
  \label{ex:archimedean}
  The Clayton copulas are Archimedean with generator
  $\psi(x) = x^{-\theta} - 1$ for $\theta > 0$,
  the Frank copulas have generator
  $\psi(x) = \log \frac{e^{-\theta}-1}{e^{-\theta x}-1}$
  for $\theta > 0$, and the Gumbel--Hougaard copulas
  have generator $\psi(x) = (- \log x)^\theta$ for $\theta \geq 1$.
  All of these are diagonally convex.
\end{example}

The condition imposed in Lemma~\ref{lem:convex_archimedean}
is not automatically satisfied. For example, consider
taking $\psi(x) = e^{1/x} - e$ on $(0, 1]$ with
$\psi(0) = \infty$.
Then $\psi^{-1}(x) = 1/\log(x+e)$ which is completely
monotonic on $[0, \infty)$ by
\citet[Equation~14]{miller2001completely}.
Moreover, $(\psi^{-1})'(x) = -1 / \bigl\{(x+e) \log(x + e)^2\bigr\}$
so $\Psi(x) = (x+e) \log(x + e)^2 /
\bigl\{(d \cdot x + e) \log(d \cdot x + e)^2\bigr\}$.
Observe that $\Psi(x) \to 1/d$ as $x \to \infty$ and that,
for $d \geq 2$, we have
$\Psi(2) \leq (2+e) \log(2 + e)^2 /
\bigl\{(2d + e) \log(2d + e)^2\bigr\} \leq 0.95 / d$.
Therefore $\Psi$ cannot be a non-increasing function
when $d \geq 2$.
Further, the resulting copulas fail to possess convex diagonal sections as
$\Delta(x) = 1 / \log\bigl(d e^{1/x} - (d-1) e\bigr)$
is non-convex on $[0, 1]$.

As a final example of a method for constructing diagonally convex
copulas, we consider a model based on mixtures of copulas.
This approach has applications in
dependence-based clustering \citep{arakelian2014clustering}.

\begin{example}[Mixture copula]
  \label{ex:mixture_copula}
  Take $K, d \in \N$ and suppose
  $p_1, \ldots, p_K \geq 0$ are such that $\sum_{k=1}^K p_k = 1$.
  For each $k \in [K]$,
  let $C_k$ be a $d$-dimensional diagonally convex copula.
  Then for $(x_1, \ldots, x_d) \in [0, 1]^d$, the mixture copula
  $(x_1, \ldots, x_d) \mapsto
  \sum_{k=1}^K p_k C_k(x_1, \ldots, x_d)$
  is diagonally convex.
\end{example}

\subsection{Examples with specific marginal distributions}
\label{sec:examples_specific}

Having established the existence of several copulas with
convex diagonal sections, we now demonstrate the application
of Theorem~\ref{thm:convex} with a selection of different
common marginal laws. In Example~\ref{ex:convex_gaussian}
we consider the normal distribution;
see Section~\ref{sec:proofs_examples} for details.

\begin{example}[Marginal Gaussian distribution]
  \label{ex:convex_gaussian}
  Let $d \in \N$, $\mu \in \R$ and $\sigma^2 > 0$.
  Take $(X_1, \ldots, X_d)$ with
  $X_i \sim \mathcal N\bigl(\mu, \sigma^2\bigr)$ for each $i \in [d]$,
  and suppose it is diagonally convex.
  Then for $x \in \R$ and $\varepsilon \geq 0$,
  \begin{align*}
    \P \biggl( x < \max_{i \in [d]} X_i \leq x + \varepsilon \biggr)
    &\leq
    \frac{\varepsilon}{\sigma}
    \Bigl( \sqrt{2 \log d} + 1 \Bigr).
  \end{align*}
\end{example}

We do not require $(X_1, \ldots, X_d)$ to be jointly
Gaussian in Example~\ref{ex:convex_gaussian};
any copula with a convex diagonal section suffices.
In particular, a square root-logarithmic dependence on the dimension
holds regardless of the form of the copula;
for example, any of the copulas described in
Examples~\ref{ex:frechet-hoeffding-convex},
\ref{ex:archimedean} and~\ref{ex:mixture_copula} are permitted.
Therefore, this example offers a version of Nazarov's inequality
\eqref{eq:nazarov} for non-Gaussian joint distributions,
with an improved constant.
Since $\varepsilon$ and $\sigma$ are scale parameters,
\citet[Example~2]{chernozhukov2015comparison} gives an
asymptotic lower bound of $(\varepsilon / \sigma) \sqrt{2 \log d}$
when $X_1, \ldots, X_d$ are known to be independent.

In particular, combining~Example \ref{ex:convex_gaussian} with
Lemma~\ref{lem:gaussian_copula_convex} allows us to deduce the
following result for the maximum of
identically distributed and jointly Gaussian random variables
(and also for their maximum absolute deviation from the mean).

\begin{example}[Joint Gaussian distribution]
  \label{ex:multivariate_gaussian}
  Let $d \in \N$, $\mu \in \R$ and $\sigma^2 > 0$.
  Suppose $(X_1, \ldots, X_d)$ is multivariate Gaussian,
  with $X_i \sim \mathcal N(\mu, \sigma^2)$ for each $i \in [d]$.
  Then for any $x \in \R$ and $\varepsilon \geq 0$,
  \begin{align*}
    \P \biggl( x < \max_{i \in [d]} X_i \leq x + \varepsilon \biggr)
    &\leq
    \frac{\varepsilon}{\sigma}
    \Bigl( \sqrt{2 \log d} + 1 \Bigr), \\
    \P \biggl( x < \max_{i \in [d]} |X_i - \mu| \leq x + \varepsilon \biggr)
    &\leq
    \frac{\varepsilon}{\sigma}
    \Bigl( \sqrt{2 \log 2d} + 1 \Bigr).
  \end{align*}
\end{example}

In Example~\ref{ex:weibull_convex} we apply
Theorem~\ref{thm:convex} to a family of Weibull distributions.
\begin{example}[Weibull distribution]
  \label{ex:weibull_convex}
  Let $d \in \N$, $\alpha \geq 1$ and $\lambda > 0$.
  Suppose $(X_1, \ldots, X_d)$ is a random vector
  with a diagonally convex copula, and
  $\P(X_i \leq x) = 1 - \exp\bigl(-(x/\lambda)^\alpha\bigr)$
  for $x \geq 0$ and $i \in [d]$.
  Then for each $x \geq 0$ and $\varepsilon \geq 0$,
  \begin{align*}
    \P \biggl( x < \max_{i \in [d]} X_i \leq x + \varepsilon \biggr)
    &\leq
    \frac{\varepsilon \alpha}{\lambda}
    \bigl(\log d + 1\bigr)^{\frac{\alpha-1}{\alpha}}.
  \end{align*}
\end{example}
The dimension dependence in Example~\ref{ex:weibull_convex}
is poly-logarithmic, with
the exponent depending on the value of the shape parameter
$\alpha$.
If $X_1, \ldots, X_d$ are independent with
common distribution function $F(x)$ and density $f(x)$ then
the density of $\max_{i \in [d]} X_i$ is $d F(x)^{d-1} f(x)$;
applying this with
$x = (\log d)^{1/\alpha}$ gives a lower bound of
$\frac{\varepsilon \alpha}{e \lambda} (\log d)^{\frac{\alpha-1}{\alpha}}$.
With $\alpha = 1$, we recover the exponential distribution,
and the bound given in Example~\ref{ex:weibull_convex}
reduces to $\varepsilon / \lambda$.
This dimension-independent result arises because
the hazard function is constant.
For $\alpha > 1$, the hazard function is increasing,
yielding a dimension-dependent bound.
When $\alpha = 2$, we recover a Rayleigh distribution and the
dimension dependence scales as $\sqrt{\log d}$;
the same as for the Gaussian distribution
(Example~\ref{ex:convex_gaussian}).

Next, we consider a family of reverse Gumbel distributions.

\begin{example}[Reverse Gumbel distribution]
  \label{ex:gumbel_convex}
  Let $d \in \N$ and $\lambda > 0$.
  Suppose that $(X_1, \ldots, X_d)$ is a random vector
  with a diagonally convex copula and
  $\P(X_i \leq x) = 1 - \exp\bigl(-e^{x / \lambda}\bigr)$
  for $x \in \R$ and $i \in [d]$.
  Then for any $x \in \R$ and $\varepsilon \geq 0$,
  \begin{align*}
    \P \biggl( x < \max_{i \in [d]} X_i \leq x + \varepsilon \biggr)
    &\leq
    \frac{\varepsilon}{\lambda}
    \bigl(1 + \log d\bigr).
  \end{align*}
\end{example}

For reverse Gumbel distributions, the hazard function is
increasing, giving a dimension-dependent bound,
here on the order of $\log d$.
By considering the density of $\max_{i \in [d]} X_i$
at $x = 0 \lor \log \log d$ with $X_1, \ldots, X_d$ independent,
a lower bound of $\frac{\varepsilon}{e \lambda} \log d$ can be derived.

In the next example, we consider a family of Pareto distributions.
\begin{example}[Pareto distribution]
  \label{ex:pareto_convex}
  Let $d \in \N$, $\alpha > 0$ and $\lambda > 0$.
  Suppose $(X_1, \ldots, X_d)$ is a random vector
  with a diagonally convex copula and
  $\P(X_i \leq x) = 1 - (\lambda/x)^\alpha$
  for $x \geq \lambda$.
  For $x \geq 0$ and $\varepsilon \geq 0$,
  \begin{align*}
    \P \biggl( x < \max_{i \in [d]} X_i \leq x + \varepsilon \biggr)
    &\leq
    \frac{\alpha \varepsilon}{\lambda}.
  \end{align*}
\end{example}
Since Pareto distributions have decreasing hazard functions,
the resulting bound in Example~\ref{ex:pareto_convex} is
dimension-independent.
A lower bound can be obtained by considering $X_1 = \cdots = X_d$.
Since the Pareto density at $x = \lambda$ is $\alpha / \lambda$,
we derive a lower bound of $\alpha \varepsilon / \lambda$.
Alternatively, if $X_1, \ldots, X_d$ are independent
Pareto random variables, then
the concentration function admits an upper bound
of the order $d^{-1/\alpha} \varepsilon / \lambda$,
up to a constant depending on $\alpha$
\citep[Example~3.3.10]{embrechts2013modelling}.

Our final example concerns a family of gamma distributions.
\begin{example}[Gamma distribution]
  \label{ex:gamma}
  Let $d \in \N$ and take $\alpha \geq 1$ and $\lambda > 0$.
  Suppose that $(X_1, \ldots, X_d)$
  is a random vector with a diagonally convex copula
  such that, for each $j \in [d]$, the Lebesgue density of $X_j$ is
  $f(x) = x^{\alpha - 1} e^{-x / \lambda} \lambda^{-\alpha} / \Gamma(\alpha)$
  on $x \geq 0$.
  Then for $x \geq 0$ and $\varepsilon \geq 0$,
  \begin{align*}
    \P \biggl( x < \max_{j \in [d]} X_j \leq x + \varepsilon \biggr)
    &\leq
    \frac{\varepsilon}{\lambda}.
  \end{align*}
\end{example}
Gamma distributions have bounded hazard functions,
yielding a dimension-independent bound in Example~\ref{ex:gamma}.
Since the gamma density at $x = \lambda (\alpha - 1)$ is
$\lambda^{- 1} (\alpha - 1)^{\alpha - 1} e^{1 - \alpha}
/ \Gamma(\alpha) \geq (2 \pi \alpha)^{-1/2} / \lambda$,
we have a lower bound of
$(2 \pi \alpha)^{-1/2}\varepsilon / \lambda$
for the concentration function when $X_1 = \cdots = X_d$.
If instead $\alpha \in (0, 1)$,
the density of $X_1$ is unbounded on every neighborhood of zero,
rendering uniform linear-in-$\varepsilon$
anti-concentration inequalities impossible.

\section{Application to high-dimensional statistical inference}
\label{sec:application}

We illustrate the applicability of our results with an example of a statistical
inference procedure using a potentially high-dimensional test statistic.
Let $X$ be an $\R^d$-valued random vector
constructed using samples taken from an underlying data set.
For example, $X$ might represent (an appropriate transformation of)
the fitted coefficients of a parametric model
or a discretized version of a nonparametric estimator.
Since weak convergence of the law of $X$ routinely
fails in high-dimensional settings,
we suppose instead that a
coupling (strong approximation) for $X$ is available
\citep[see, for example,][and references therein]{%
  chernozhukov2013gaussian,%
  chernozhukov2014anti,%
  chernozhukov2014gaussian,%
  cattaneo2022yurinskii,%
  cattaneo2024uniform,%
  cattaneo2024strong%
}.
That is, there exists an $\R^d$-valued random
vector $T=(T_1,\ldots,T_d)$, on the same probability space as
$X=(X_1,\ldots,X_d)$, with
\begin{align*}
  \P \bigl( \| X - T \|_\infty > \varepsilon \bigr)
  \leq p(\varepsilon)
\end{align*}
for some decreasing function $p: [0, \infty) \to [0, 1]$,
where $\|x\|_\infty \vcentcolon= \max_{i \in [d]} |x_i|$.
Typically, either one knows the law of $T$
explicitly, or can draw samples from it.
Inference proceeds by choosing a significance level
$\alpha \in (0, 1)$ and computing a quantile
$q_\alpha \vcentcolon= \inf\bigl\{ q\in\mathbb{R} :
\P \bigl(\max_{i \in [d]} T_i \leq q \bigr) \geq 1-\alpha\bigr\}$.
It is straightforward to verify that for all $\varepsilon \geq 0$,
\begin{align*}
  \Bigl|
  \P \Bigl(
    \max_{i \in [d]} X_i
    > q_\alpha
  \Bigr)
  - \alpha
  \Bigr|
  \leq
  p(\varepsilon)
  + \Bigl\{
    \P \Bigl(
      q_\alpha - \varepsilon < \max_{i \in [d]} T_i \leq q_\alpha
    \Bigr)
    \lor \P \Bigl(
      q_\alpha < \max_{i \in [d]} T_i \leq q_\alpha + \varepsilon
    \Bigr)
  \Bigr\};
\end{align*}
this bound can then be minimized over $\varepsilon \geq 0$.
In this setting,
it suffices to control the anti-concentration terms
in a neighborhood of the quantile $q_\alpha$;
our Theorem~\ref{thm:convex} therefore provides sharper bounds
than those derived using the L{\'e}vy concentration function
$L\bigl(\max_{i \in [d]} T_i, \varepsilon\bigr)$.
As such, the validity of the resulting test based on $X$
relies on the availability of both
\begin{inlineroman}
  \item a tight coupling inequality for $\|X - T\|_\infty$, and
  \item a sharp anti-concentration bound for $\max_{i \in [d]} T_i$.
\end{inlineroman}

Our main anti-concentration results given in Section~\ref{sec:arbitrary}
and Section~\ref{sec:convex} can be applied whenever the entries
$T_1, \ldots, T_d$ are identically distributed.
While this is, in principle, a restrictive assumption,
it is often satisfied in practice for the purpose of maximizing
statistical power against alternative hypotheses.
For example, consider the canonical setting in which
the law of $X$ is approximated using a multivariate Gaussian
random vector $T$. It is usual to standardize the entries
of $X$ (and therefore also the corresponding entries of $T$)
so that $\E[X_i] = 0$ and $\E\bigl[X_i^2\bigr] = 1$;
it follows immediately that each $T_i$ is distributed
marginally as $\cN(0, 1)$.

Our results are applicable to a substantially broader class of
inference procedures than existing approaches, which typically
require joint Gaussianity of the entries of $T$.
Specifically, under Theorem~\ref{thm:convex}
we allow for an arbitrary common marginal distribution
as well as a wide range of joint distributions, so long as the
underlying copula is
diagonally convex (Definition~\ref{def:diagonally_convex}).
Moreover, the resulting anti-concentration inequality
is agnostic to the precise form of the copula, allowing for
settings where the joint distribution of $T$ is unknown or difficult
to estimate.

\subsection{Gaussian mixture approximations for factor models}

For a specific example in a non-Gaussian regime, we
consider the Gaussian mixture approximation for
martingale factor models discussed by
\cite[Section~2.5]{cattaneo2022yurinskii}.
In this setting, a size-$n$ sample of
$d$-dimensional observations $\bigl(X^{(1)}, \ldots, X^{(n)}\bigr)$
is taken from an underlying random process and is assumed to
form a zero-mean martingale. For example,
$X^{(1)}, \ldots, X^{(n)}$ may denote multivariate quantities
derived from a time series estimation procedure. We impose further
structure by means of a factor model;
take $m \in \N$ and suppose that
\begin{align*}
  X^{(i)} = L g^{(i)} + \varepsilon^{(i)}
\end{align*}
for $i \in [n]$,
where $L$ takes values in $\R^{d \times m}$, $g^{(i)}$ in $\R^m$,
and $\varepsilon^{(i)}$ in $\R^d$.
We interpret $g^{(i)}$ as a latent factor variable
and $L$ as a random factor loading, with
independent disturbances $\varepsilon^{(i)}$.
We assume that
$\varepsilon^{(i)}$ is zero-mean and finite-variance for each $i \in [n]$, and
that $\bigl(\varepsilon^{(1)}, \ldots, \varepsilon^{(n)}\bigr)$ is independent
of $L$ and $\bigl(g^{(1)}, \ldots, g^{(n)}\bigr)$.
Suppose that
$\E \bigl[ g^{(i)} \mid L, g^{(1)}, \ldots, g^{(i-1)} \bigr] = 0$
for each $i \in [n]$.

Corollary~2.2 in \cite{cattaneo2022yurinskii} provides
sufficient conditions for a coupling
between $\sum_{i=1}^{n} X^{(i)}$ and an $\R^d$-valued random vector $T$
with conditional distribution $T \mid L \sim \cN(0, \Sigma)$,
where
\begin{align*}
  \Sigma \vcentcolon=
  \sum_{i=1}^n \Bigl(L \Var\bigl[g^{(i)} \mid L\bigr] L^\T
  + \Var\bigl[\varepsilon^{(i)}\bigr]\Bigr).
\end{align*}

We now impose some further conditions on the law of $T$;
in particular, we ensure that the copula associated with $T$ is
the independence copula and that the
entries $T_j$ share a common marginal distribution.
Therefore, suppose that the Lebesgue density function
$f: \R^d \to [0, \infty)$ of $T$ satisfies
\begin{align}
  \label{eq:gmm}
  f(x_1, \ldots, x_d)
  &=
  \prod_{j=1}^d
  \Biggl(
    \sum_{k=1}^K \frac{p_k}{\sigma_k}
    \phi\biggl(\frac{x_j}{\sigma_{k}}\biggr)
  \Biggr)
  = \sum_{k_1 = 1}^K
  \cdots
  \sum_{k_d = 1}^K
  \Biggl(
    \prod_{j=1}^d
    p_{k_j}
  \Biggr)
  \Biggl(
    \prod_{j=1}^d
    \frac{1}{\sigma_{k_j}}
    \phi\biggl(\frac{x_j}{\sigma_{k_j}}\biggr)
  \Biggr),
\end{align}
where $\sigma_{k} > 0$ and $p_{k} \in [0, 1]$ for $k \in [K]$
satisfy $\sum_{k=1}^K p_{k} = 1$.
The variables $T_1, \ldots, T_d$ are seen to be independent
and identically distributed by the
factorization given in the first equality in~\eqref{eq:gmm}.
Further, $T$ follows a multivariate Gaussian mixture distribution with
$K^d$ components by the second equality in~\eqref{eq:gmm};
in general, $T$ is not a Gaussian random vector.
The copula associated with $T$ is, therefore, the independence
copula, which is diagonally convex by
Example~\ref{ex:frechet-hoeffding-convex}.
In Example~\ref{ex:gmm}, we present an anti-concentration bound
based on Theorem~\ref{thm:convex}
for the maximum statistic of such a distribution,
using the result from Example~\ref{ex:convex_gaussian} and the structure of
the multivariate mixture model.

\begin{example}[Gaussian mixture distribution]
  \label{ex:gmm}
  Let $d, K \in \N$ and suppose $(T_1, \ldots, T_d)$ has
  Lebesgue density as given in \eqref{eq:gmm},
  where $p_{k} \in (0, 1]$ with $\sum_{k=1}^K p_{k} = 1$
  and $0 < \sigma_{k} \leq 1$, for $k \in [K]$.
  We assume that $\sigma_1 = 1$,
  and write $\sigma \vcentcolon= \min_{k \in [K]} \sigma_k$.
  Then for any $x \in \R$ and $\varepsilon \geq 0$,
  \begin{align}
    \label{eq:gmm_bound}
    \P \biggl( x < \max_{j \in [d]} T_j \leq x + \varepsilon \biggr)
    &\leq
    \Biggl\{
      \frac{\varepsilon}{p_1}
      \Biggl(
        \! \sqrt{2 \log d} +
        2 \sum_{k=1}^K
        \frac{p_k}{\sigma_k}
      \Biggr)
    \Biggr\}
    \land
    \biggl\{
      \frac{\varepsilon}{\sigma}
      \Bigl( \sqrt{2 \log d} + 2 \Bigr)
    \biggr\}.
  \end{align}
\end{example}

By applying Nazarov's inequality \eqref{eq:nazarov}
instead of our Theorem~\ref{thm:convex},
one can show that \eqref{eq:gmm_bound} admits an upper bound of
$\frac{\varepsilon}{\sigma}
\bigl( \sqrt{2 \log d} + 2 \bigr)$.
As such, our Example~\ref{ex:gmm} offers a sharper
anti-concentration inequality when the minimum variance
$\sigma$ is small.
For instance, consider the setting where
$\sigma_k = 1$ for $k \in [K-1]$ and $\sigma_K = \sigma \leq 1$,
and suppose for simplicity that $p_k = 1/K$ for all $k \in [K]$.
Then Example~\ref{ex:gmm} gives
\begin{align*}
  \P \biggl( x < \max_{j \in [d]} T_j \leq x + \varepsilon \biggr)
  &\leq
  \varepsilon
  \biggl(
    K \sqrt{2 \log d}
    + 2(K-1)
    + \frac{2}{\sigma}
  \biggr).
\end{align*}
Crucially, the dependence on the dimension $d$
(which may be large in high-dimensional regimes) and the minimum
variance $\sigma^2$
(which could be small in the presence of approximately
degenerate Gaussian mixture components) is additive;
contrast this with the corresponding bound from Nazarov's inequality
\eqref{eq:nazarov}, which contains the multiplicative term
$\frac{\varepsilon}{\sigma} \sqrt{2 \log d}$.

Such a separation between the high-dimensional
and the degenerate variance terms is to be expected in the
resulting anti-concentration inequality for the following reason.
Either
\begin{inlineroman}
  \item $d$ is large: then since $T_i$ are i.i.d.,\ the
    maximum statistic will be realized far from the origin
    with high probability; here, the marginal density
    is dominated by the component with the largest (unit) variance; or
  \item $d$ is small; then the maximum statistic may realize near zero,
    where the component with the smallest variance dominates the density.
\end{inlineroman}
This approach is similar to that taken by
\cite{lopes2020bootstrapping} in the context of bootstrap
approximations under variance decay.

As an alternative to applying our general Theorem~\ref{thm:convex},
one could instead apply Nazarov's inequality after conditioning
on the latent factors,
observing that the minimum variance satisfies
$\sigma^2 \geq \min_{j \in [d]} \sum_{i=1}^n
\Var\bigl[\varepsilon_j^{(i)}\bigr]$
almost surely, by the definition of $\Sigma$.
However, this leads to an anti-concentration inequality
which is worse than that in Example~\ref{ex:gmm}
whenever $\min_{j \in [d]} \Sigma_{j j}$ is
dominated by the factor variance component
$\sum_{i=1}^n \bigl(L \Var[g^{(i)} \mid L] L^\T\bigr)_{j j}$.

\subsection{High-dimensional chi-squared approximations}

As a second non-Gaussian example, we consider a setting
where the $d$-dimensional statistic $X$ is approximated
using chi-squared distributions. To be precise, we suppose the
existence of a coupling between $X$ and a vector $T = (T_1, \ldots, T_d)$
where $T_j \sim \chi^2_p$ for each $j \in [d]$ with $p \in \N$.
Such couplings may be established, for instance, via the application of
tools from the strong approximation literature
\citep{csorgo1981strong,Lindvall_1992_Book,pollard2002user}.
Approximate chi-squared behavior of vector-valued statistics arises in
various contexts, including goodness-of-fit hypothesis
testing based on Pearson’s statistic \citep{gaunt2017stein}, and in the
analysis of degenerate U-statistics \citep{dobler2018gamma}.

To bound the anti-concentration function of the maximum
statistic $\max_{j \in [d]} T_j$,
we do not require the entries to be independent;
rather we assume that the joint law of $(T_1, \ldots, T_d)$ possesses
a diagonally convex copula (Definition~\ref{def:diagonally_convex}).
Example~\ref{ex:gamma} then covers the family of chi-squared distributions
$\chi^2_p$ where $p \in \N$ with $p \geq 2$
as a special case. In particular, if $T_j \sim \chi^2_p$
then $T_j$ follows a gamma distribution with
$\alpha = p/2 \geq 1$ and $\lambda = 2$.
Therefore, for $x \geq 0$ and $\varepsilon \geq 0$,
\begin{align*}
  \P \biggl( x < \max_{j \in [d]} T_j \leq x + \varepsilon \biggr)
  &\leq
  \frac{\varepsilon}{\lambda}.
\end{align*}
Boundedness of the chi-squared hazard function thus yields
dimension-independent anti-concentration inequalities,
under a diagonally convex copula assumption.
\section{Proofs}
\label{sec:proofs}

We give full proofs for all of our results,
along with detailed calculations associated with
examples given in the main paper.

\subsection{Theorem~\ref{thm:common}}
\label{sec:proof_thm_common}

Before proving Theorem~\ref{thm:common}, we first establish
the result in the special case
where the common law is the standard uniform distribution,
in Lemma~\ref{lem:uniform}.
For clarity, we write $U_i \sim \cU$ for each $i \in [d]$.

\begin{lemma}
  \label{lem:uniform}
  Let $d \in \N$. For $u \in [0, 1]$ and $\delta \in [0, 1-u]$,
  \begin{align}
    \label{eq:uniform_upper}
    \max_{\P \in \cP_d(\cU)}
    \P \biggl( u < \max_{i \in [d]} U_i \leq u + \delta \biggr)
    &=
    (d \delta)
    \land (u + \delta), \\
    \label{eq:uniform_lower}
    \min_{\P \in \cP_d(\cU)}
    \P \biggl( u < \max_{i \in [d]} U_i \leq u + \delta \biggr)
    &=
    0 \lor
    \bigl\{1 - u - d (1 - u - \delta)\bigr\}.
  \end{align}
\end{lemma}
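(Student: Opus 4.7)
The plan is to invoke identity~\eqref{eq:max_diagonal}, which reduces the problem of optimizing over $\P \in \cP_d(\cU)$ to the problem of optimizing the increment $\Delta(u+\delta) - \Delta(u)$ over all functions $\Delta : [0,1] \to [0,1]$ satisfying the three conditions of Lemma~\ref{lem:copula_diagonal}. This cleanly splits the proof into two pieces: universal inequalities derived from (i)--(iii), and matching extremal constructions realized by the explicit diagonals $\Delta_\up$ and $\Delta_\lo$ given in~\eqref{eq:diagonal_upper}--\eqref{eq:diagonal_lower}.

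For~\eqref{eq:uniform_upper}, I would first observe that $\Delta(0) = 0$ follows from (ii) and (iii), so $\Delta \geq 0$ on $[0,1]$ by (iii). The Lipschitz half of (iii) then yields $\Delta(u+\delta) - \Delta(u) \leq d\delta$, while (ii) combined with $\Delta(u) \geq 0$ gives $\Delta(u+\delta) - \Delta(u) \leq \Delta(u+\delta) \leq u+\delta$; taking the minimum proves the $\leq$ direction. For~\eqref{eq:uniform_lower}, the monotonicity half of (iii) gives $\Delta(u+\delta) - \Delta(u) \geq 0$, while applying the Lipschitz half of (iii) to the pair $u+\delta \leq 1$ together with (i) yields $\Delta(u+\delta) \geq 1 - d(1-u-\delta)$; combining with $\Delta(u) \leq u$ from (ii) proves $\Delta(u+\delta) - \Delta(u) \geq 1 - u - d(1-u-\delta)$, and taking the maximum with zero closes the $\geq$ direction.

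For tightness, I would verify that $\Delta_\up$ and $\Delta_\lo$ are themselves $d$-dimensional copula diagonals, i.e.\ satisfy (i)--(iii), by piecewise inspection; condition (i) is immediate from the definitions, (ii) reduces to checking the slanted segments lie below the identity, and (iii) reduces to confirming that each linear piece has slope in $[0, d]$ with no downward jumps at the breakpoints. Given this, one evaluates the increments directly: $\Delta_\up(u+\delta) - \Delta_\up(u)$ equals $d\delta$ whenever $u+\delta \leq \frac{du}{d-1} \land 1$ and equals $u+\delta$ once $u+\delta$ crosses into the upper identity segment, the transition occurring exactly where $d\delta = u+\delta$; a parallel case analysis shows $\Delta_\lo(u+\delta) - \Delta_\lo(u)$ is either $0$ or $1 - u - d(1-u-\delta)$, matching~\eqref{eq:uniform_lower}.

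The main obstacle is the piecewise verification of condition (iii) for $\Delta_\up$ and $\Delta_\lo$, because the Lipschitz constant $d$ and the pointwise constraint $\Delta(u) \leq u$ from (ii) are both attained with equality, leaving no slack at the breakpoints $u \land \tfrac{d-1}{d}$ and $\tfrac{du}{d-1} \land 1$ (for $\Delta_\up$) or $u$ and $\tfrac{d+u-1}{d}$ (for $\Delta_\lo$). A clean way to handle this is to separate the analysis into the two regimes $u \leq \tfrac{d-1}{d}$ and $u > \tfrac{d-1}{d}$, in which the piecewise forms simplify and the validity checks become mechanical.
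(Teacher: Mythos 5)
Your proposal is correct and follows essentially the same structure as the paper: reduce to optimizing the diagonal increment $\Delta(u+\delta) - \Delta(u)$, bound it, and exhibit the extremal diagonals $\Delta_\up$, $\Delta_\lo$. The one cosmetic difference is that you derive the universal $\leq$ and $\geq$ inequalities analytically from conditions (i)--(iii) of Lemma~\ref{lem:copula_diagonal}, whereas the paper derives them probabilistically by a union bound before passing to the diagonal framework; the two are equivalent, and your version is arguably more uniform since it works entirely at the level of copula diagonals. (Minor nit: $\Delta(0) = 0$ does not quite follow from (ii)--(iii) alone as stated---condition (ii) gives only $\Delta(0) \leq 0$, and you need the codomain $[0,1]$ for the reverse inequality, after which (iii) gives $\Delta \geq 0$ everywhere; this does not affect the argument.)
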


\begin{proof}[Proof of Lemma~\ref{lem:uniform}]
  By a union bound, for any $\P \in \cP_d(\cU)$,
  \begin{align*}
    \P \Bigl(u < \max_{i \in [d]} U_i \leq u + \delta\Bigr)
    &\leq
    \P \biggl(
      \{U_1 \leq u + \delta\}
      \cap
      \bigcup_{i \in [d]} \{u < U_i \leq u + \delta\}
    \biggr)
    \leq (d \delta) \land (u + \delta), \\
    \P \Bigl(
      u < \max_{i \in [d]} U_i \leq u + \delta
    \Bigr)
    &\geq
    0 \lor \biggl\{
      1 - \P \biggl(
        \max_{i \in [d]} U_i \leq u
      \biggr)
      - \P\biggl(
        \max_{i \in [d]} U_i > u + \delta
      \biggr)
    \biggr\} \\
    &\geq
    0 \lor
    \bigl\{1 - u - d (1 - u - \delta)\bigr\}.
  \end{align*}

  It remains to show these bounds can be attained.
  Let $\Delta: [0, 1] \to [0, 1]$ be a $d$-dimensional copula diagonal,
  and $C: [0, 1]^d \to [0, 1]$ be a copula satisfying
  $\Delta(t) = C(t, \ldots, t)$ for all $t \in [0, 1]$
  (see Lemma~\ref{lem:copula_diagonal}).
  By Sklar's theorem \citep[Theorem~2.10.9]{nelsen2006},
  $C$ is the joint distribution function of a random vector
  $(U_1, \ldots, U_d)$ where $U_i \sim \cU$ for each $i \in [d]$.
  Further, if $(U_1, \ldots, U_d) \sim C$ then
  \begin{align*}
    \P \biggl( u < \max_{i \in [d]} U_i \leq u + \delta \biggr)
    &=
    C(u + \delta, \ldots, u + \delta) - C(u, \ldots, u)
    = \Delta(u + \delta) - \Delta(u).
  \end{align*}

  For \eqref{eq:uniform_upper}, consider taking $\Delta = \Delta_\up$
  as defined by \eqref{eq:diagonal_upper}.
  Clearly $\Delta_\up(1) = 1$.
  If $t \leq \frac{d u}{d-1}$ then $d(t-u) - t \leq 0$,
  so $\Delta_\up(t) \leq t$ for all $t \in [0, 1]$.
  Further, $\Delta_\up$ is piecewise linear with the gradient of each
  piece bounded below by zero and above by $d$,
  so $0 \leq \Delta_\up(t') - \Delta_\up(t) \leq d(t'-t)$
  whenever $0 \leq t \leq t' \leq 1$.
  Thus $\Delta_\up$ satisfies the conditions of
  Lemma~\ref{lem:copula_diagonal}
  and is a $d$-dimensional copula diagonal.

  If $u \leq \frac{d-1}{d}$, then $\Delta_\up(u) = 0$,
  while $u > \frac{d-1}{d}$ implies $\Delta_\up(u) = d u - d + 1$.
  If $u + \delta \leq \frac{d u}{d-1}$
  (which occurs if and only if $d \delta \leq u + \delta$),
  then $u \leq \frac{d-1}{d}$ gives
  $\Delta_\up(u + \delta) = d \delta$,
  while $u > \frac{d-1}{d}$ implies
  $\Delta_\up(u + \delta) = d u + d \delta - d + 1$.
  Conversely if $u + \delta > \frac{d u}{d-1}$,
  then $u \leq \frac{d-1}{d}$ and
  $\Delta_\up(u + \delta) = u + \delta$.

  For \eqref{eq:uniform_lower}, consider taking $\Delta = \Delta_\lo$
  as defined by \eqref{eq:diagonal_lower}.
  Clearly we have
  $\Delta_\lo(1) = 1$, and $1 - d(1-t) - t \leq 0$, so
  $\Delta_\lo(t) \leq t$ for all $t \in [0, 1]$.
  Further, $\Delta_\lo$ is piecewise linear with each gradient
  bounded below by zero and above by $d$,
  so $0 \leq \Delta_\lo(t') - \Delta_\lo(t) \leq d(t'-t)$
  when $0 \leq t \leq t' \leq 1$.
  Thus $\Delta_\lo$ satisfies the conditions of
  Lemma~\ref{lem:copula_diagonal}
  and is a $d$-dimensional copula diagonal.

  Now observe that $\Delta_\lo(u) = u$.
  If $u + \delta \leq \frac{d+u-1}{d}$
  (if and only if $1 - u - d(1-u-\delta) \leq 0$),
  then $\Delta_\lo(u + \delta) = u$.
  Conversely if $u + \delta > \frac{d+u-1}{d}$,
  then $\Delta_\lo(u + \delta) = 1 - d(1-u-\delta)$.
\end{proof}

\begin{proof}[Proof of Theorem~\ref{thm:common}]
  Let $u = F(x) \in [0, 1]$
  and $\delta = F(x + \varepsilon) - F(x) \in [0, 1-u]$.
  Let $F^{-1}: [0, 1] \to \R$ be the quantile function
  satisfying $F^{-1}(t) \leq s$ if and only if
  $t \leq F(s)$ for all $s \in \R$ and $t \in [0, 1]$.
  If $U \sim \cU$ and $X = F^{-1}(U)$, then
  $\P(X \leq x) = \P\bigl(F^{-1}(U) \leq x\bigr)
  = \P\bigl(U \leq F(x)\bigr) = F(x)$,
  so $X \sim F$. To prove \eqref{eq:common_upper},
  take $(U_1, \ldots, U_d)$ as in
  \eqref{eq:uniform_upper} of Lemma~\ref{lem:uniform}
  and set $X_i = F^{-1}(U_i)$ for $i \in [d]$ so
  \begin{align*}
    \P \biggl( x < \max_{i \in [d]} X_i \leq x + \varepsilon \biggr)
    &= \P \biggl( x < \max_{i \in [d]} F^{-1}(U_i) \leq x + \varepsilon \biggr)
    = \P \biggl( u < \max_{i \in [d]} U_i \leq u + \delta \biggr) \\
    &=
    (d \delta) \land (u + \delta)
    =
    \bigl\{d \bigl(F(x + \varepsilon) - F(x)\bigr)\bigr\}
    \land F(x + \varepsilon).
  \end{align*}
  To show \eqref{eq:common_lower},
  take $(U_1, \ldots, U_d)$ as in
  \eqref{eq:uniform_lower} of Lemma~\ref{lem:uniform}
  and $X_i = F^{-1}(U_i)$ for $i \in [d]$
  so that
  \begin{align*}
    \P \biggl( x < \max_{i \in [d]} X_i \leq x + \varepsilon \biggr)
    &=
    \P \biggl( u < \max_{i \in [d]} U_i \leq u + \delta \biggr)
    =
    0 \lor \bigl(1 - u - d (1 - u - \delta)\bigr) \\
    &=
    0 \lor \bigl\{1 - F(x) - d \bigl(1 - F(x + \varepsilon)\bigr)\bigr\}.
    \qedhere
  \end{align*}
\end{proof}

\subsection{Theorem~\ref{thm:convex}}
\label{sec:proof_thm_convex}

\begin{proof}[Proof of Theorem~\ref{thm:convex}]
  Let $\Delta$ be the copula diagonal section
  associated with the law of $(X_1, \ldots, X_d)$,
  and suppose it is a convex function on $[0, 1]$.
  Since $\Delta(1) = 1$,
  for any $u \in [0, 1]$ and $\delta \in [0, 1-u]$,
  \begin{align*}
    \Delta(u + \delta)
    &= \Delta\biggl(
      \frac{1-u-\delta}{1-u} \cdot u
      + \frac{\delta}{1-u} \cdot 1
    \biggr)
    \leq
    \frac{1-u-\delta}{1-u} \Delta(u)
    + \frac{\delta}{1-u}.
  \end{align*}
  Combining this with the facts that
  $\Delta(u + \delta) - \Delta(u) \leq d \delta$
  and $\Delta(u) \geq 0$ (see Lemma~\ref{lem:copula_diagonal}) gives
  \begin{align*}
    \Delta(u + \delta) - \Delta(u)
    &\leq
    \biggl\{
      \frac{\delta}{1 - u}
      \bigl(1 - \Delta(u)\bigr)
    \biggr\}
    \land (d \delta)
    \leq
    \delta\,
    \biggl(
      \frac{1}{1 - u}
      \land d
    \biggr).
  \end{align*}
  A quantile transform (see the proof of Theorem~\ref{thm:common})
  with $u = F(x)$ and
  $\delta = F(x + \varepsilon) - F(x)$ yields
  \begin{align*}
    \P \biggl( x < \max_{i \in [d]} X_i \leq x + \varepsilon \biggr)
    &=
    \P \biggl( F(x) < \max_{i \in [d]} F(X_i) \leq F(x + \varepsilon) \biggr)
    =
    \Delta \circ F(x + \varepsilon) - \Delta \circ F(x) \\
    &\leq
    \bigl( F(x + \varepsilon) - F(x) \bigr)
    \biggl\{
      \frac{1}{1 - F(x)}
      \land d
    \biggr\}.
  \end{align*}
  We now show that the maximum is attained.
  For $d \in \N$ and $u \in [0, 1]$, define
  \begin{align*}
    \Delta_u(t)
    &\vcentcolon=
    \biggl\{ t - \biggl(u \land \frac{d-1}{d}\biggr) \biggr\}
    \biggl(
      \frac{1}{1-u}
      \land d
    \biggr)
    \cdot \I \biggl\{ u \land \frac{d-1}{d} < t \biggr\},
  \end{align*}
  which is a valid $d$-dimensional
  copula diagonal by Lemma~\ref{lem:copula_diagonal}.
  Note that for $\delta \in [0, 1-u]$, we have
  $\Delta_u(u + \delta) - \Delta_u(u)
  = \delta \bigl(\frac{1}{1-u} \land d\bigr)$.
  Apply the quantile transform as above to conclude.
\end{proof}

\subsection{Lemma~\ref{lem:gaussian_copula_convex}}
\label{sec:proof_lem_gaussian_copula_convex}

\begin{proof}[Proof of Lemma~\ref{lem:gaussian_copula_convex}]
  If $d \geq 2$ and $\Sigma_{11} = 0$,
  by Sklar's theorem \citep[Theorem~2.10.9]{nelsen2006},
  given a copula $C': [0, 1]^{d-1} \to [0, 1]$ for $(X_2, \ldots, X_d)$,
  construct a copula for $(X_1, \ldots, X_d)$ as
  $C(x_1, \ldots, x_d) = x_1 \land C'(x_2, \ldots, x_d)$.
  The diagonal of $C$ is $C(x, \ldots, x) = C'(x, \ldots, x)$.
  If $d = 1$ and $\Sigma_{11} = 0$ then take $C(x) = x$.
  We thus assume without loss of generality
  that $\Sigma_{i i} > 0$ for all $i \in [d]$.

  The copula of $(X_1, \ldots, X_d)$ is the same as that for
  $(Z_1, \ldots, Z_d)$, where $Z_i = (X_i - \mu_i) / \sigma_i$ and
  $\sigma_i^2 = \Sigma_{i i}$ for each $i \in [d]$.
  Therefore, without loss of generality, set
  $\mu = 0$ and $\Sigma_{i i} = 1$ for each $i \in [d]$.

  If $\Sigma_{i j} = 1$ where $1 \leq i < j \leq d$
  then $\max_{i \in [d]} X_i = \max_{i \in [d]\setminus\{j\}} X_i$
  almost surely so, without loss of generality, we take
  $\Sigma_{i j} < 1$ whenever $i \neq j$.
  Lemmas~5 and~6 in \cite{chernozhukov2015comparison} show
  $\max_{i \in [d]} X_i$ has Lebesgue density $f(x) = \phi(x) g(x)$
  where $g: \R \to \R$ is increasing. For $x \in (0, 1)$, note
  $\Delta(x) = \P\bigl(\max_{i \in [d]} X_i \leq \Phi^{-1}(x)\bigr)$,
  so $\Delta$ is differentiable on $(0, 1)$ with
  increasing derivative
  \begin{align*}
    \Delta'(x)
    &=
    \frac{f \circ \Phi^{-1}(x)}{\phi \circ \Phi^{-1}(x)}
    =
    g \circ \Phi^{-1}(x).
  \end{align*}
  Thus $\Delta$ is convex on $(0, 1)$, and also on $[0, 1]$
  as $0 = \Delta(0) \leq \Delta(x) \leq \Delta(1) = 1$
  for all $x \in (0, 1)$.
\end{proof}

\subsection{Lemma~\ref{lem:convex_archimedean}}

\begin{proof}[Proof of Lemma~\ref{lem:convex_archimedean}]
  The diagonal section of the Archimedean copula $C$ is
  \begin{align*}
    \Delta(x)
    &= \psi^{-1}\bigl(d \cdot \psi(x)\bigr).
  \end{align*}
  As $\psi$ is differentiable with non-zero derivative on $(0, 1)$,
  the inverse function theorem gives
  \begin{align*}
    \Delta'(x)
    &= \frac{d \cdot \psi'(x)}
    {\psi' \circ \psi^{-1}\bigl(d \cdot \psi(x)\bigr)}
    = d \cdot \Psi \circ \psi(x).
  \end{align*}
  Since $\psi$ is strictly decreasing and $\Psi$ is non-increasing,
  $\Delta'$ is non-decreasing on $(0, 1)$.
  As $0 = \Delta(0) \leq \Delta(x) \leq \Delta(1) = 1$
  for all $x \in (0, 1)$, we conclude that $\Delta$ is
  convex on $[0, 1]$.
\end{proof}

\subsection{Examples}
\label{sec:proofs_examples}

\begin{proof}[Details for Example~\ref{ex:archimedean}]
  We apply Lemma~\ref{lem:convex_archimedean} to each family.
  Firstly, the Clayton copula has inverse generator
  $\psi^{-1}(x) = (1 + x)^{-1/\theta}$ for $\theta > 0$
  \citep[Example~4.23]{nelsen2006}.
  With $x \in (0, \infty)$,
  \begin{align*}
    \bigl(\psi^{-1}\bigr)'(x)
    &=
    - \frac{1}{\theta} (1 + x)^{-1/\theta - 1}
    < 0,
    &\Psi(x)
    &= \biggl( \frac{1 + x}{1 + d \cdot x} \biggr) ^{1/\theta + 1}
  \end{align*}
  and is decreasing, since
  $\frac{\mathrm d}{\mathrm d x} \frac{1 + x}{1 + d \cdot x}
  = \frac{1 - d}{(1 + d \cdot x)^2} \leq 0$.

  Next, the Frank copula has inverse generator
  $\psi^{-1}(x) = -\frac{1}{\theta}
  \log \bigl(1 - \bigl(1 - e^{-\theta}\bigr) e^{-x}\bigl)$
  for $\theta > 0$
  \citep[Example~4.24]{nelsen2006}.
  With $x \in (0, \infty)$
  and setting $a = \bigl(1 - e^{-\theta}\bigr) \in (0, 1)$,
  \begin{align*}
    \bigl(\psi^{-1}\bigr)'(x)
    &=
    -\frac{1}{\theta}
    \frac{a e^{-x}}{1 - a e^{-x}}
    < 0,
    &\Psi(x)
    &=
    \frac{e^x - a}
    {e^{d x} - a}, \\
    \Psi'(x)
    &= \frac{(1 - d) e^{(d+1) x} - a e^x + a d e^{d x}}
    {\bigl(e^{d x} - a\bigr)^2}
    \leq 0,
  \end{align*}
  since by convexity,
  $e^{d x} \leq \frac{d-1}{d} e^{(d+1)x} + \frac{1}{d} e^x$.
  Thus $\Psi$ is non-increasing.

  Finally, the Gumbel--Hougaard copula has inverse generator
  $\psi^{-1}(x) = \exp\bigl(-x^{1/\theta}\bigr)$
  for $\theta \geq 1$ \citep[Example~4.25]{nelsen2006}.
  With $x \in (0, \infty)$,
  \begin{align*}
    \bigl(\psi^{-1}\bigr)'(x)
    &=
    - \frac{1}{\theta}
    x^{1/\theta - 1}
    \exp \bigl(-x^{1/\theta}\bigr)
    < 0,
    &\Psi(x)
    &= d^{1/\theta - 1}
    \exp \bigl(x^{1/\theta}(1 - d^{1/\theta})\bigr)
  \end{align*}
  and is decreasing, since $d^{1/\theta} \geq 1$.
\end{proof}

\begin{proof}[Details for Example~\ref{ex:mixture_copula}]
  As each $C_k$ is diagonally convex,
  $x \mapsto \sum_{k=1}^K p_k C_k(x, \ldots, x)$
  is convex. The mixture copula is verified to be a copula
  using \cite[Definition~2.10.6]{nelsen2006}.
\end{proof}

\begin{proof}[Details for Example~\ref{ex:convex_gaussian}]
  Let $\mu = 0$ and $\sigma = 1$;
  consider $\mu + \sigma X_i$ for the general case.
  By Theorem~\ref{thm:convex},
  \begin{align*}
    \P \biggl( x < \max_{i \in [d]} X_i \leq x + \varepsilon \biggr)
    &\leq
    \bigl( \Phi(x + \varepsilon) - \Phi(x) \bigr)
    \biggl\{
      \frac{1}{1 - \Phi(x)}
      \land d
    \biggr\}.
  \end{align*}
  For $x \leq 0$ we have $\Phi(x) \leq 1/2$ and
  $\Phi(x + \varepsilon) - \Phi(x) \leq \varepsilon \phi(0)
  = \frac{1}{\sqrt{2\pi}}$, so
  \begin{align*}
    \bigl( \Phi(x + \varepsilon) - \Phi(x) \bigr)
    \biggl\{
      \frac{1}{1 - \Phi(x)}
      \land d
    \biggr\}
    \leq
    \frac{\varepsilon}{\sqrt{2 \pi}}
    (2 \land d)
    = \varepsilon \sqrt{\frac{2}{\pi}}.
  \end{align*}
  If $x \geq 0$ then
  $\Phi(x + \varepsilon) - \Phi(x) \leq \varepsilon \phi(x)$
  and $\frac{\phi(x)}{1 - \Phi(x)} \leq
  \frac{2}{\sqrt{x^2 + 4} - x} \leq x + 1$
  by \cite{birnbaum1942inequality}, so
  \begin{align*}
    \bigl( \Phi(x + \varepsilon) - \Phi(x) \bigr)
    \biggl\{
      \frac{1}{1 - \Phi(x)}
      \land d
    \biggr\}
    \leq
    \varepsilon \phi(x)
    \biggl\{
      \frac{x + 1}{\phi(x)}
      \land d
    \biggr\}
    =
    \varepsilon
    \bigl\{
      (x + 1)
      \land \bigl(d \cdot \phi(x)\bigr)
    \bigr\}.
  \end{align*}
  Setting $x = \sqrt{2 \log d} \geq 0$
  gives $x + 1 = \sqrt{2 \log d} + 1$
  and $d \cdot \phi(x) = \frac{1}{\sqrt{2\pi}}$.
  Since $x \mapsto x + 1$ is continuous and strictly increasing,
  while $x \mapsto d \cdot \phi(x)$ is continuous and strictly decreasing
  on $[0, \infty)$, we have
  \begin{align}
    \nonumber
    \sup_{x \geq 0}
    \bigl\{
      (x + 1)
      \land \bigl(d \cdot \phi(x)\bigr)
    \bigr\}
    &= \inf_{x \geq 0}
    \bigl\{
      (x + 1)
      \lor \bigl(d \cdot \phi(x)\bigr)
    \bigr\} \\
    \label{eq:minimax}
    &\leq
    \Bigl(
      \sqrt{2 \log d} + 1
    \Bigr)
    \lor
    \frac{1}{\sqrt{2\pi}}
    = \sqrt{2 \log d} + 1.
  \end{align}
  The result follows as
  $\sqrt{2 \log d} + 1 \geq \sqrt{\frac{2}{\pi}}$.
\end{proof}

\begin{proof}[Details for Example~\ref{ex:multivariate_gaussian}]
  The first result follows by Lemma~\ref{lem:gaussian_copula_convex}
  and Example~\ref{ex:convex_gaussian}.
  For the second, consider the $2d$-dimensional random vector
  $Y = (X_1 - \mu, \ldots, X_d-\mu, \mu-X_1, \ldots, \mu-X_d)$,
  which has a multivariate Gaussian distribution
  with $Y_i \sim \mathcal N(0, \sigma^2)$ for each $i \in [2d]$.
  By the first inequality,
  \begin{align*}
    \P \biggl( x < \max_{i \in [d]} |X_i - \mu| \leq x + \varepsilon \biggr)
    &=
    \P \biggl( x < \max_{i \in [2d]} Y_i \leq x + \varepsilon \biggr)
    \leq
    \frac{\varepsilon}{\sigma}
    \Bigl( \sqrt{2 \log 2 d} + 1 \Bigr).
    \qedhere
  \end{align*}
\end{proof}

\begin{proof}[Details for Example~\ref{ex:weibull_convex}]
  We assume that the scale parameter is $\lambda = 1$;
  the general result for $\lambda > 0$ then follows by considering
  $\lambda X_i$ for $i \in [d]$.
  The CDF and Lebesgue density of $X_i$ are therefore
  \begin{align*}
    F(x)
    &=
    1 - \exp\bigl(-x^\alpha\bigr),
    &f(x)
    &= \alpha
    x^{\alpha - 1}
    \exp\bigl(-x^\alpha\bigr).
  \end{align*}
  With $x_* = \bigl( \frac{\alpha-1}{\alpha} \bigr)^{1/\alpha}$,
  $f$ increases on $[0, x_*]$
  and decreases on $[x_*, \infty)$.
  By Theorem~\ref{thm:convex}, if $x \leq x_*$,
  \begin{align*}
    \P \biggl( x < \max_{i \in [d]} X_i \leq x + \varepsilon \biggr)
    &\leq
    \frac{\varepsilon f(x_*)}{1 - F(x_*)}
    =
    \varepsilon
    \alpha
    x_*^{\alpha - 1}
    =
    \varepsilon
    \alpha
    \biggl( \frac{\alpha-1}{\alpha} \biggr)^{\frac{\alpha - 1}{\alpha}}
    \leq
    \varepsilon
    \alpha.
  \end{align*}
  Conversely, if $x > x_*$ then
  \begin{align*}
    \P \biggl( x < \max_{i \in [d]} X_i \leq x + \varepsilon \biggr)
    &\leq
    \varepsilon f(x)
    \biggl\{
      \frac{1}{1 - F(x)}
      \land d
    \biggr\}
    =
    \varepsilon
    \alpha x^{\alpha-1}
    \Bigl\{
      1
      \land
      \Bigl(
        d
        \exp\bigl(-x^\alpha\bigr)
      \Bigr)
    \Bigr\}.
  \end{align*}
  Take $x = (\log d + 1)^{1/\alpha} \geq x_*$
  so that $x^{\alpha - 1} = (\log d + 1)^{\frac{\alpha-1}{\alpha}}$
  and $\exp\bigl(-x^\alpha\bigr) \leq \frac{1}{d}$.
  As $x \mapsto x^{\alpha - 1}$ is increasing
  and $x \mapsto x^{\alpha-1} \exp\bigl(-x^\alpha\bigr)$
  is decreasing to zero on $[x_*, \infty)$, we have
  \begin{align*}
    \sup_{x \geq x_*}
    \Bigl\{
      x^{\alpha-1}
      \land
      \Bigl(
        d
        x^{\alpha-1}
        \exp\bigl(-x^\alpha\bigr)
      \Bigr)
    \Bigr\}
    &\leq
    \inf_{x \geq x_*}
    \Bigl\{
      x^{\alpha-1}
      \lor
      \Bigl(
        d
        x^{\alpha-1}
        \exp\bigl(-x^\alpha\bigr)
      \Bigr)
    \Bigr\}
    \leq
    (\log d + 1)^{\frac{\alpha-1}{\alpha}}.
    \qedhere
  \end{align*}
\end{proof}

\begin{proof}[Details for Example~\ref{ex:gumbel_convex}]
  We assume that the scale parameter is $\lambda = 1$;
  the general result for $\lambda > 0$ follows by considering
  $\lambda X_i$ for $i \in [d]$.
  The CDF, Lebesgue density, and hazard function of $X_i$ are
  \begin{align*}
    F(x)
    &=
    1 - \exp\bigl(-e^{x}\bigr),
    &f(x)
    &= \exp\bigl(x-e^{x}\bigr),
    &h(x)
    &= e^x.
  \end{align*}
  Note that $f(x)$ is increasing on $(-\infty, 0]$,
  so for $x \leq 0$, by Theorem~\ref{thm:convex},
  there exists $(X_1, \ldots, X_d)$ with
  \begin{align*}
    \P \biggl( x < \max_{i \in [d]} X_i \leq x + \varepsilon \biggr)
    &\leq
    \frac{\varepsilon f(0)}{1 - F(0)}
    = \varepsilon.
  \end{align*}
  Further, $f$ is decreasing on $[0, \infty)$
  while $h$ is increasing on $[0, \infty)$.
  Note that for $d \geq 2$,
  we have $h(x) = d \cdot f(x)$ if and only if $x = \log \log d$,
  and if $d=1$ then $d \cdot f(x) \leq 1$.
  So for $x \geq 0$,
  \begin{align*}
    \P \biggl( x < \max_{i \in [d]} X_i \leq x + \varepsilon \biggr)
    &\leq
    \varepsilon f(x)
    \biggl\{
      \frac{1}{1 - F(x)}
      \land d
    \biggr\}
    \leq
    \varepsilon
    \bigl\{
      h(x)
      \land \bigl(d \cdot f(x)\bigr)
    \bigr\}
    \leq \varepsilon \bigl(1 + \log d\bigr).
    \qedhere
  \end{align*}
\end{proof}

\begin{proof}[Details for Example~\ref{ex:pareto_convex}]
  We assume that the scale parameter is $\lambda = 1$;
  the general result follows by considering
  $\lambda X_i$ for $i \in [d]$.
  The CDF, Lebesgue density, and hazard function of $X_i$ are therefore
  \begin{align*}
    F(x)
    &=
    1 - x^{-\alpha},
    &f(x)
    &= \alpha
    x^{-\alpha - 1},
    &h(x)
    &= \alpha / x,
  \end{align*}
  for $x \geq 1$. Since $f$ and $h$ are both decreasing,
  and by Theorem~\ref{thm:convex},
  \begin{align*}
    \P \biggl( x < \max_{i \in [d]} X_i \leq x + \varepsilon \biggr)
    &\leq
    \varepsilon
    \bigl\{h(x) \land \bigl(d \cdot f(x)\bigr)\bigr\}
    \leq
    \varepsilon
    \bigl\{h(1) \land \bigl(d \cdot f(1)\bigr)\bigr\}
    = \alpha \varepsilon.
    \qedhere
  \end{align*}
\end{proof}

\begin{proof}[Details for Example~\ref{ex:gamma}]
  We may assume that the scale parameter is $\lambda = 1$.
  Writing $\Gamma(\alpha, x) \vcentcolon=
  \int_x^\infty t^{\alpha-1} e^{-t} \diffi t$
  for $\alpha \geq 1$ and $x \geq 0$,
  the Lebesgue density and distribution function of
  $X_1$ are
  \begin{align*}
    f(x) &=
    \frac{x^{\alpha - 1} e^{-x}}{\Gamma(\alpha)},
    &F(x) &=
    1 - \frac{\Gamma(\alpha, x)}{\Gamma(\alpha)}.
  \end{align*}
  Since $\alpha \geq 1$, we bound
  \begin{align*}
    \Gamma(\alpha, x)
    &= \int_0^\infty (t+x)^{\alpha-1} e^{-t - x} \diffi t
    \geq x^{\alpha-1} e^{-x} \int_0^\infty e^{-t} \diffi t
    = x^{\alpha-1} e^{-x}.
  \end{align*}
  By the mean value theorem, for some $y \in [x, x + \varepsilon]$,
  and since $F(x)$ is increasing,
  \begin{align*}
    \frac{F(x + \varepsilon) - F(x)}{1 - F(x)}
    &=
    \frac{\varepsilon f(y)}{1 - F(x)}
    \leq
    \frac{\varepsilon f(y)}{1 - F(y)}
    = \frac{\varepsilon y^{\alpha - 1} e^{-y}}{\Gamma(\alpha, y)}
    \leq \varepsilon.
  \end{align*}
  Thus, by Theorem~\ref{thm:convex},
  \begin{align*}
    \P \biggl( x < \max_{j \in [d]} X_j \leq x + \varepsilon \biggr)
    &\leq
    \bigl( F(x + \varepsilon) - F(x) \bigr)
    \biggl\{
      \frac{1}{1 - F(x)}
      \land d
    \biggr\}
    \leq
    \varepsilon.
    \qedhere
  \end{align*}
\end{proof}

\begin{proof}[Details for Example~\ref{ex:gmm}]
  We begin by showing that the first term on the right-hand side
  is an upper bound for the left-hand side.
  Note that the common CDF and density function of
  $T_j$, for $j \in [d]$, are
  \begin{align*}
    F(x)
    &=
    \sum_{k=1}^K p_k
    \Phi\biggl(\frac{x}{\sigma_{k}}\biggr),
    &f(x)
    &=
    \sum_{k=1}^K \frac{p_k}{\sigma_k}
    \phi\biggl(\frac{x}{\sigma_{k}}\biggr),
  \end{align*}
  respectively. As $(T_1, \ldots, T_d)$ are i.i.d.,
  and recalling that the independence copula is diagonally convex
  by Example~\ref{ex:frechet-hoeffding-convex},
  we have by Theorem~\ref{thm:convex} that
  \begin{align*}
    &\P \biggl( x < \max_{j \in [d]} T_j \leq x + \varepsilon \biggr)
    \leq
    \bigl( F(x + \varepsilon) - F(x) \bigr)
    \biggl\{
      \frac{1}{1 - F(x)}
      \land d
    \biggr\} \\
    &\quad=
    \sum_{k=1}^K p_k
    \Bigl(
      \Phi(x / \sigma_k + \varepsilon / \sigma_k)
      - \Phi(x / \sigma_k)
    \Bigr)
    \Biggl\{
      \frac{1}
      {1 - \sum_{k=1}^K p_k \Phi(x / \sigma_k)}
      \land d
    \Biggr\} \\
    &\quad\leq
    \sum_{k=1}^K p_k
    \Bigl(
      \Phi(x / \sigma_k + \varepsilon / \sigma_k)
      - \Phi(x / \sigma_k)
    \Bigr)
    \biggl\{
      \frac{1}
      {p_1 \bigl(1 - \Phi(x)\bigr)}
      \land d
    \biggr\}.
  \end{align*}
  Consider first the case that $x \leq 0$;
  then $\Phi(x + \varepsilon) - \Phi(x) \leq \varepsilon \phi(0)$
  and $\Phi(x) \leq 1/2$, so
  \begin{align*}
    \P \biggl( x < \max_{j \in [d]} T_j \leq x + \varepsilon \biggr)
    &\leq
    \frac{2 \varepsilon \phi(0)}{p_1}
    \sum_{k=1}^K
    \frac{p_k}{\sigma_k}.
  \end{align*}
  Next, if $x > 0$ then
  $\Phi(x + \varepsilon) - \Phi(x) \leq \varepsilon \phi(x)$
  and $\frac{\phi(x)}{1 - \Phi(x)} \leq x + 1$
  by \cite{birnbaum1942inequality}, so
  \begin{align*}
    \P \biggl( x < \max_{j \in [d]} T_j \leq x + \varepsilon \biggr)
    &\leq
    \varepsilon
    \sum_{k=1}^K p_k
    \frac{\phi(x / \sigma_k)}{\sigma_k \phi(x)}
    \biggl\{
      \frac{\phi(x)}
      {p_1 \bigl(1 - \Phi(x)\bigr)}
      \land \bigl( d \cdot \phi(x) \bigr)
    \biggr\} \\
    &\leq
    \varepsilon
    \sum_{k=1}^K p_k
    \frac{\phi(x / \sigma_k)}{\sigma_k \phi(x)}
    \biggl\{
      \frac{x + 1}{p_1} \land \bigl( d \cdot \phi(x) \bigr)
    \biggr\}.
  \end{align*}
  If $x \geq 1$ then since
  $0 < \sigma_k \leq 1$ for each $k \in [K]$, it follows that
  $2 \log(1/\sigma_k) \leq 1/\sigma_k^2 - 1$ and so
  we have $x \geq \sqrt{\frac{2 \log (1/\sigma_k)}{1/\sigma_k^2 - 1}}$.
  Then $\log(1/\sigma_k) \leq x^2 (1/\sigma_k^2 - 1)/2$,
  so $e^{-x^2/(2 \sigma_k^2)} \leq \sigma_k e^{-x^2/2}$
  and hence $\phi(x/\sigma_k) \leq \sigma_k \phi(x)$.
  In this case, by \eqref{eq:minimax},
  \begin{align*}
    \P \biggl( x < \max_{j \in [d]} T_j \leq x + \varepsilon \biggr)
    &\leq
    \frac{\varepsilon}{p_1}
    \bigl\{
      (x + 1) \land \bigl( d \cdot \phi(x) \bigr)
    \bigr\}
    \leq
    \frac{\varepsilon}{p_1}
    \Bigl( \sqrt{2 \log d} + 1 \Bigr).
  \end{align*}
  Alternatively, if $0 < x < 1$,
  then since $\sigma_k \leq 1$ for each $k \in [K]$,
  we have $\phi(x/\sigma_k) \leq \phi(x)$ so
  \begin{align*}
    \P \biggl( x < \max_{j \in [d]} T_j \leq x + \varepsilon \biggr)
    &\leq
    \frac{2 \varepsilon}{p_1}
    \sum_{k=1}^K
    \frac{p_k}{\sigma_k}.
  \end{align*}
  Combining these cases, we deduce that for all $x \in \R$,
  \begin{align*}
    \P \biggl( x < \max_{j \in [d]} T_j \leq x + \varepsilon \biggr)
    &\leq
    \biggl\{
      \frac{\varepsilon}{p_1}
      \Bigl( \! \sqrt{2 \log d} + 1 \Bigr)
      \!
    \biggr\}
    \lor
    \Biggl\{
      \frac{2 \varepsilon}{p_1}
      \sum_{k=1}^K
      \frac{p_k}{\sigma_k}
    \Biggr\}
    \leq
    \frac{\varepsilon}{p_1}
    \Biggl(
      \!
      \sqrt{2 \log d} +
      2 \sum_{k=1}^K
      \frac{p_k}{\sigma_k}
    \Biggr).
  \end{align*}
  We now address the second term on the right-hand side.
  By the second equality in \eqref{eq:gmm},
  $(T_1, \ldots, T_d)$ follow a Gaussian mixture distribution
  with $K^d$ components, so consider the following representation:
  \begin{align*}
    (T_1, \ldots, T_d)
    &=
    \sum_{k_1 = 1}^K
    \cdots
    \sum_{k_d = 1}^K
    \I \bigl\{Z = (k_1, \ldots, k_d)\bigr\}
    \bigl(Y_{1, k_1}, \ldots, Y_{d, k_d}\bigr),
  \end{align*}
  where the latent group assignment
  $Z$ takes values in $[K]^d$
  with $\P\bigl(Z = (k_1, \ldots, k_d)\bigr) = \prod_{j=1}^d p_{k_j}$,
  and with $Y_{j, k} \sim \cN\bigl(0, \sigma_{k}^2\bigr)$
  independently for $(j, k) \in [d] \times [K]$ and
  independently of $Z$.
  Therefore by conditioning on $Z$ and applying
  Nazarov's inequality for unequal variances
  \citep[Theorem~1]{chernozhukov2017detailed},
  \begin{align*}
    &\P \biggl( x < \max_{j \in [d]} T_j \leq x + \varepsilon \biggr)
    =
    \E \biggl[
      \P \biggl( x < \max_{j \in [d]} T_j \leq x + \varepsilon
      \Bigm| Z \biggr)
    \biggr] \\
    &\quad=
    \sum_{k_1 = 1}^K
    \cdots
    \sum_{k_d = 1}^K
    \Biggl(
      \prod_{j=1}^d p_{k_j}
    \Biggr)
    \P \biggl( x < \max_{j \in [d]} T_j \leq x + \varepsilon
      \Bigm| Z = (k_1, \ldots, k_d)
    \biggr) \\
    &\quad=
    \sum_{k_1 = 1}^K
    \cdots
    \sum_{k_d = 1}^K
    \Biggl(
      \prod_{j=1}^d p_{k_j}
    \Biggr)
    \P \biggl( x < \max_{j \in [d]} Y_{j, k_j}
      \leq x + \varepsilon
    \biggr) \\
    &\quad\leq
    \sum_{k_1 = 1}^K
    \cdots
    \sum_{k_d = 1}^K
    \Biggl(
      \prod_{j=1}^d p_{k_j}
    \Biggr)
    \frac{\varepsilon}{\min_{j \in [d]} \sigma_{k_j}}
    \Bigl( \sqrt{2 \log d} + 2 \Bigr) \\
    &\quad\leq
    \frac{\varepsilon \bigl( \sqrt{2 \log d} + 2 \bigr)}{\sigma}
    \prod_{j=1}^d
    \Biggl( \sum_{k = 1}^K p_{k} \Biggr)
    =
    \frac{\varepsilon}{\sigma}
    \Bigl( \sqrt{2 \log d} + 2 \Bigr).
  \end{align*}
  We remark that the inequality
  $\min_{j \in [d]} \sigma_{k_j} \geq \sigma$
  is essentially optimal in the regime where
  the dimension $d$ is much larger than
  the number of original components $K$, since
  they differ in only $(K-1)^d$ of the
  $K^d$ possible values for $(k_1, \ldots, k_d)$.
  Further, $\sum_{j=1}^d \I\{k_j = K\}$ is typically around
  $d/K$, so with $\sigma_{k_{(1)}} \leq \cdots \leq \sigma_{k_{(d)}}$,
  the quantity
  $\max_{j \in [d]} \bigl(1 + \sqrt{2 \log j}\bigr) / \sigma_{k_{(j)}}$
  is usually at least on the order
  $\bigl(1 + \sqrt{2 \log (d/K)}\bigr) / \sigma$
  and hence not much smaller than
  $\bigl(1 + \sqrt{2 \log d}\bigr) / \sigma$.
  Therefore the refined version of Nazarov's inequality
  given by \citet[Theorem~10]{deng2020beyond}
  provides no significant improvement in this setting.
\end{proof}
\section{Conclusion}%
\label{sec:conclusion}

We presented sharp upper and lower bounds for the pointwise
concentration function of the
maximum (or minimum) statistic of $d$ identically distributed
random variables, under no further assumptions on their dependence
structure (copula).
When further restricted to copulas with convex diagonal sections,
we demonstrated an improved (and similarly optimal) upper bound
on the aforementioned concentration function.
We verified this condition for a range of popular copulas
and applied our results to several different marginal distributions.
Among other contributions, we recovered a version of Nazarov's inequality
with substantially relaxed assumptions and derived
similar results for non-Gaussian laws.
We presented an application to high-dimensional statistical inference,
giving an explicit example pertaining to Gaussian mixture
approximations for factor models.

There are some potential directions for future research.
Firstly, our main results apply only when the marginal
distributions of each entry in $(X_1, \ldots, X_d)$ agree.
This is a somewhat restrictive assumption, precluding applications
in settings where the random vector of interest is not
standardized entry-wise. For instance,
in Example~\ref{ex:multivariate_gaussian}
we are currently unable to accommodate the setting
of $(X_1, \ldots, X_d) \sim \cN(\mu, \Sigma)$ with
$\mu \in \R^d$ and $\Sigma \in \R^{d \times d}$ an
arbitrary positive semi-definite matrix, which is
handled by Nazarov's inequality
\citep{nazarov2003maximal,chernozhukov2017detailed}
whenever $\min_{i \in [d]} \Sigma_{i i} > 0$.
It is desirable to know whether anti-concentration inequalities
can be derived in regimes where the
marginal laws are Gaussian but the
copula is non-Gaussian, for example, following our
Example~\ref{ex:convex_gaussian}.
The main challenge in establishing such generalizations
is to formulate a natural extension of the ``diagonally convex''
property introduced in Definition~\ref{def:diagonally_convex},
along with a corresponding result analogous to our Theorem~\ref{thm:convex}.
A secondary task would then be to verify the new condition for a
selection of popular multivariate copulas;
initial investigation suggests that this is unlikely to be as
straightforward as in Section~\ref{sec:convex}.

A closely related problem is that of providing bounds for the
probability that $(X_1, \ldots, X_d)$ lies near the perimeter
of a (high-dimensional) rectangle. That is, to control
\begin{align}
  \label{eq:high_dim_rectangle}
  \P \Biggl(
    \bigcap_{i=1}^d
    \bigl\{
      X_i \leq x_i + \varepsilon_i
    \bigr\}
  \Biggr)
  - \P \Biggl(
    \bigcap_{i=1}^d
    \bigl\{
      X_i \leq x_i
    \bigr\}
  \Biggr)
\end{align}
where $x_i \in \R$ and $\varepsilon_i > 0$ for $i \in [d]$.
Setting $Y_i = (X_i - x_i) / \varepsilon_i$
for $i \in [d]$, \eqref{eq:high_dim_rectangle} is equal to
$\P \bigl( 0 < \max_{i \in [d]} Y_i \leq 1 \bigr)$.
To establish bounds for \eqref{eq:high_dim_rectangle},
it would be sufficient to generalize our main results
to the setting where the marginal distributions of
$X_1, \ldots, X_d$ may differ.
See \citet[Lemma~2.2, and references therein]{koike2021notes}
for an example in the multivariate Gaussian setting.

Our anti-concentration results
are concerned with the worst-case choice of a
diagonally convex copula. As such, they do not require knowledge or
estimation of the underlying copula. However, they therefore
may not provide ``dimension-free'' results; that is, bounds which depend
on the dimension $d$ only through quantities relating directly to
the law of the random vector $(X_1, \ldots, X_d)$, such as the
expected maximum statistic
$\E[\max_{i \in [d]} |X_i|]$.
See \citet{kozbur2021dimension} for an example of such a result in the
Gaussian setting;
the development of generalized versions
would be of interest.

We focus on the maximum statistic because its
distribution function admits a simple closed form in terms
of the common marginal law and the diagonal of the copula.
However, it may be possible to extend our results and techniques to
other suitably monotone functions of $(X_1, \ldots, X_d)$.
For example, \cite{kozbur2021dimension}
studied the $k$-max order statistics
for jointly Gaussian variables.
 
\section{Acknowledgments}
We thank Boris Hanin and Grigoris Paouris for their comments.
The authors gratefully acknowledge financial support from the National Science
Foundation through grant DMS-2210561.
 
\bibliographystyle{apalike}
\bibliography{references}

\begin{thebibliography}{}

\bibitem[Aizenman et~al., 2009]{aizenman2009bernoulli}
Aizenman, M., Germinet, F., Klein, A., and Warzel, S. (2009).
\newblock On {Bernoulli} decompositions for random variables, concentration
  bounds, and spectral localization.
\newblock {\em Probability Theory and Related Fields}, 143:219--238.

\bibitem[Arakelian and Karlis, 2014]{arakelian2014clustering}
Arakelian, V. and Karlis, D. (2014).
\newblock Clustering dependencies via mixtures of copulas.
\newblock {\em Communications in Statistics--Simulation and Computation},
  43(7):1644--1661.

\bibitem[Bakshi et~al., 2020]{bakshi2020outlier}
Bakshi, A., Diakonikolas, I., Hopkins, S.~B., Kane, D., Karmalkar, S., and
  Kothari, P.~K. (2020).
\newblock Outlier-robust clustering of {Gaussians} and other non-spherical
  mixtures.
\newblock In {\em 2020 IEEE 61st Annual Symposium on Foundations of Computer
  Science}, pages 149--159. IEEE.

\bibitem[Belloni et~al., 2024]{belloni2024anti}
Belloni, A., Fang, E.~X., and Shen, S. (2024).
\newblock Anti-concentration inequalities for the difference of maxima of
  {Gaussian} random vectors.
\newblock {\em \arxivref{2408.13348}}.

\bibitem[Birnbaum, 1942]{birnbaum1942inequality}
Birnbaum, Z.~W. (1942).
\newblock An inequality for {Mills'} ratio.
\newblock {\em Annals of Mathematical Statistics}, 13:245--246.

\bibitem[Bobkov and Chistyakov, 2015]{bobkov2015}
Bobkov, S.~G. and Chistyakov, G.~P. (2015).
\newblock On concentration functions of random variables.
\newblock {\em Journal of Theoretical Probability}, 28:976--988.

\bibitem[Cattaneo et~al., 2024]{cattaneo2024uniform}
Cattaneo, M.~D., Feng, Y., and Underwood, W.~G. (2024).
\newblock Uniform inference for kernel density estimators with dyadic data.
\newblock {\em Journal of the American Statistical Association},
  119(548):2695--2708.

\bibitem[Cattaneo et~al., 2025]{cattaneo2022yurinskii}
Cattaneo, M.~D., Masini, R.~P., and Underwood, W.~G. (2025).
\newblock {Yurinskii}'s coupling for martingales.
\newblock {\em Annals of Statistics, forthcoming}.

\bibitem[Cattaneo and Yu, 2025]{cattaneo2024strong}
Cattaneo, M.~D. and Yu, R.~R. (2025).
\newblock Strong approximations for empirical processes indexed by {Lipschitz}
  functions.
\newblock {\em Annals of Statistics}, 81(2):667--737.

\bibitem[Chernozhukov et~al., 2013]{chernozhukov2013gaussian}
Chernozhukov, V., Chetverikov, D., and Kato, K. (2013).
\newblock Gaussian approximations and multiplier bootstrap for maxima of sums
  of high-dimensional random vectors.
\newblock {\em Annals of Statistics}, 41(6):2786--2819.

\bibitem[Chernozhukov et~al., 2014a]{chernozhukov2014anti}
Chernozhukov, V., Chetverikov, D., and Kato, K. (2014a).
\newblock Anti-concentration and honest, adaptive confidence bands.
\newblock {\em Annals of Statistics}, 42(5):1787--1818.

\bibitem[Chernozhukov et~al., 2014b]{chernozhukov2014gaussian}
Chernozhukov, V., Chetverikov, D., and Kato, K. (2014b).
\newblock Gaussian approximation of suprema of empirical processes.
\newblock {\em Annals of Statistics}, 42(4):1564--1597.

\bibitem[Chernozhukov et~al., 2015]{chernozhukov2015comparison}
Chernozhukov, V., Chetverikov, D., and Kato, K. (2015).
\newblock Comparison and anti-concentration bounds for maxima of {Gaussian}
  random vectors.
\newblock {\em Probability Theory and Related Fields}, 162(1):47--70.

\bibitem[Chernozhukov et~al., 2017a]{chernozhukov2017central}
Chernozhukov, V., Chetverikov, D., and Kato, K. (2017a).
\newblock Central limit theorems and bootstrap in high dimensions.
\newblock {\em Annals of Probability}, 45(4):2309--2352.

\bibitem[Chernozhukov et~al., 2017b]{chernozhukov2017detailed}
Chernozhukov, V., Chetverikov, D., and Kato, K. (2017b).
\newblock Detailed proof of {Nazarov}'s inequality.
\newblock {\em \arxivref{1711.10696}}.

\bibitem[Cs{\"o}rg{\"o} and R{\'e}v{\'e}sz, 1981]{csorgo1981strong}
Cs{\"o}rg{\"o}, M. and R{\'e}v{\'e}sz, P. (1981).
\newblock {\em Strong Approximations in Probability and Statistics}.
\newblock Probability and Mathematical Statistics: a series of monographs and
  textbooks. Academic Press.

\bibitem[Cuculescu and Theodorescu, 2001]{cuculescu2001copulas}
Cuculescu, I. and Theodorescu, R. (2001).
\newblock Copulas: Diagonals, tracks.
\newblock {\em Revue Roumaine de Math{\'e}matiques Pures et Appliqu{\'e}es},
  46(6):731--742.

\bibitem[Deng and Zhang, 2020]{deng2020beyond}
Deng, H. and Zhang, C.-H. (2020).
\newblock Beyond {Gaussian} approximation: bootstrap for maxima of sums of
  independent random vectors.
\newblock {\em Annals of Statistics}, 48(6):3643--3671.

\bibitem[D{\"o}bler and Peccati, 2018]{dobler2018gamma}
D{\"o}bler, C. and Peccati, G. (2018).
\newblock The gamma {Stein} equation and noncentral de {Jong} theorems.
\newblock {\em Bernoulli}, 24(4B):3384--3421.

\bibitem[Durante and Sempi, 2016]{durante2016principles}
Durante, F. and Sempi, C. (2016).
\newblock {\em Principles of Copula Theory}.
\newblock Chapman and Hall/CRC Press, New York.

\bibitem[Embrechts et~al., 2013]{embrechts2013modelling}
Embrechts, P., Kl{\"u}ppelberg, C., and Mikosch, T. (2013).
\newblock {\em Modelling Extremal Events: for Insurance and Finance}, volume~33
  of {\em Stochastic Modelling and Applied Probability}.
\newblock Springer Science \& Business Media.

\bibitem[Fern{\'a}ndez-S{\'a}nchez and {\'U}beda-Flores,
  2018]{fernandez2018constructions}
Fern{\'a}ndez-S{\'a}nchez, J. and {\'U}beda-Flores, M. (2018).
\newblock Constructions of copulas with given diagonal (and opposite diagonal)
  sections and some generalizations.
\newblock {\em Dependence Modeling}, 6(1):139--155.

\bibitem[Fox et~al., 2021]{fox2021combinatorial}
Fox, J., Kwan, M., and Sauermann, L. (2021).
\newblock Combinatorial anti-concentration inequalities, with applications.
\newblock In {\em Mathematical Proceedings of the Cambridge Philosophical
  Society}, volume 171, pages 227--248. Cambridge University Press.

\bibitem[Frank et~al., 1987]{frank1987best}
Frank, M.~J., Nelsen, R.~B., and Schweizer, B. (1987).
\newblock Best-possible bounds for the distribution of a sum---a problem of
  {Kolmogorov}.
\newblock {\em Probability Theory and Related Fields}, 74(2):199--211.

\bibitem[Gaunt et~al., 2017]{gaunt2017stein}
Gaunt, R.~E., Pickett, A.~M., and Reinert, G. (2017).
\newblock Chi-square approximation by {Stein's} method with application to
  {Pearson's} statistic.
\newblock {\em Annals of Applied Probability}, 27:720--756.

\bibitem[Giessing, 2023]{giessing2023}
Giessing, A. (2023).
\newblock Anti-concentration of suprema of {Gaussian} processes and {Gaussian}
  order statistics.
\newblock {\em \arxivref{2310.12119}}.

\bibitem[G{\"o}tze et~al., 2019]{gotze2017large}
G{\"o}tze, F., Naumov, A., Spokoiny, V.~G., and Ulyanov, V.~V. (2019).
\newblock Large ball probabilities, {Gaussian} comparison and
  anti-concentration.
\newblock {\em Bernoulli}, 25(4A):2538--2563.

\bibitem[Jaworski, 2009]{jaworski2009copulas}
Jaworski, P. (2009).
\newblock On copulas and their diagonals.
\newblock {\em Information Sciences}, 179(17):2863--2871.

\bibitem[Koike, 2021]{koike2021notes}
Koike, Y. (2021).
\newblock Notes on the dimension dependence in high-dimensional central limit
  theorems for hyperrectangles.
\newblock {\em Japanese Journal of Statistics and Data Science}, 4:257--297.

\bibitem[Kozbur, 2021]{kozbur2021dimension}
Kozbur, D. (2021).
\newblock Dimension-free anticoncentration bounds for {Gaussian} order
  statistics with discussion of applications to multiple testing.
\newblock {\em \arxivref{2107.10766}}.

\bibitem[Krishnapur, 2016]{krishnapur2016anti}
Krishnapur, M. (2016).
\newblock Anti-concentration inequalities.
\newblock Lecture notes, Advanced Training in Mathematics Workshop in Applied
  Probability, Indian Institute of Technology Bombay.

\bibitem[Kuchibhotla et~al., 2021]{kuchibhotla2021high}
Kuchibhotla, A.~K., Mukherjee, S., and Banerjee, D. (2021).
\newblock High-dimensional {CLT}: Improvements, non-uniform extensions and
  large deviations.
\newblock {\em Bernoulli}, 27(1):192 -- 217.

\bibitem[L{\'e}vy, 1954]{levy1954}
L{\'e}vy, P. (1954).
\newblock {\em Th{\'e}orie de l'Addition Des Variables Al{\'e}atoires}.
\newblock Gauthier-Villars, Paris.

\bibitem[Lindvall, 1992]{Lindvall_1992_Book}
Lindvall, T. (1992).
\newblock {\em Lectures on the Coupling Method}.
\newblock Dover Publications, New York.

\bibitem[Litvak et~al., 2017]{litvak2017adjacency}
Litvak, A.~E., Lytova, A., Tikhomirov, K., Tomczak-Jaegermann, N., and Youssef,
  P. (2017).
\newblock Adjacency matrices of random digraphs: singularity and
  anti-concentration.
\newblock {\em Journal of Mathematical Analysis and Applications},
  445(2):1447--1491.

\bibitem[Livshyts, 2014]{livshyts2014maximal}
Livshyts, G.~V. (2014).
\newblock Maximal surface area of a convex set in $\mathbb{R}^n$ with respect
  to log concave rotation invariant measures.
\newblock In {\em Geometric Aspects of Functional Analysis: Israel Seminar
  (GAFA) 2011--2013}, pages 355--383. Springer.

\bibitem[Livshyts, 2021]{livshyts2021some}
Livshyts, G.~V. (2021).
\newblock Some remarks about the maximal perimeter of convex sets with respect
  to probability measures.
\newblock {\em Communications in Contemporary Mathematics}, 23(05).

\bibitem[Lopes et~al., 2020]{lopes2020bootstrapping}
Lopes, M.~E., Lin, Z., and M{\"u}ller, H.-G. (2020).
\newblock Bootstrapping max statistics in high dimensions: Near-parametric
  rates under weak variance decay and application to functional and multinomial
  data.
\newblock {\em Annals of Statistics}, 48(2):1214--1229.

\bibitem[Meka et~al., 2015]{meka2015anticoncentration}
Meka, R., Nguyen, O., and Vu, V.~H. (2015).
\newblock Anti-concentration for polynomials of independent random variables.
\newblock {\em Theory of Computing}, 12:1--17.

\bibitem[Miller and Samko, 2001]{miller2001completely}
Miller, K.~S. and Samko, S.~G. (2001).
\newblock Completely monotonic functions.
\newblock {\em Integral Transforms and Special Functions}, 12(4):389--402.

\bibitem[Nazarov, 2003]{nazarov2003maximal}
Nazarov, F. (2003).
\newblock On the maximal perimeter of a convex set in $\mathbb{R}^n$ with
  respect to a {Gaussian} measure.
\newblock In {\em Geometric Aspects of Functional Analysis: Israel Seminar,
  2001--2002}, pages 169--187. Springer.

\bibitem[Nelsen, 2006]{nelsen2006}
Nelsen, R.~B. (2006).
\newblock {\em An Introduction to Copulas}.
\newblock Springer Series in Statistics. Springer, New York.

\bibitem[Nie, 2022]{nie2022matrix}
Nie, Z. (2022).
\newblock Matrix anti-concentration inequalities with applications.
\newblock In {\em Proceedings of the 54th Annual ACM SIGACT Symposium on Theory
  of Computing}, pages 568--581.

\bibitem[Paouris, 2012]{paouris2012small}
Paouris, G. (2012).
\newblock Small ball probability estimates for log-concave measures.
\newblock {\em Transactions of the American Mathematical Society},
  364(1):287--308.

\bibitem[Paouris and Valettas, 2018]{paouris2018gaussian}
Paouris, G. and Valettas, P. (2018).
\newblock A {Gaussian} small deviation inequality for convex functions.
\newblock {\em Annals of Probability}, 46(3):1441--1454.

\bibitem[Pollard, 2002]{pollard2002user}
Pollard, D. (2002).
\newblock {\em A User's Guide to Measure Theoretic Probability}.
\newblock Cambridge Series in Statistical and Probabilistic Mathematics.
  Cambridge University Press.

\bibitem[Rudelson and Vershynin, 2015]{rudelson2015small}
Rudelson, M. and Vershynin, R. (2015).
\newblock Small ball probabilities for linear images of high-dimensional
  distributions.
\newblock {\em International Mathematics Research Notices},
  2015(19):9594--9617.

\bibitem[Saumard and Wellner, 2014]{saumard_wellner}
Saumard, A. and Wellner, J.~A. (2014).
\newblock {Log-concavity and strong log-concavity: A review}.
\newblock {\em Statistics Surveys}, 8:45--114.

\bibitem[Vershynin and Rudelson, 2007]{vershynin2007}
Vershynin, R. and Rudelson, M. (2007).
\newblock Anti-concentration inequalities.
\newblock In {\em Phenomena in High Dimensions, Third Annual Conference, Samos,
  Greece}.

\end{thebibliography}

\pagebreak

\end{document}